\documentclass[11pt]{amsart}

\usepackage[margin=0.85in]{geometry}
\usepackage[T1]{fontenc}
\usepackage{lmodern}
\usepackage{amsmath,amssymb,mathtools}
\usepackage{microtype}
\usepackage{enumitem}
\usepackage[unicode,hidelinks,bookmarksopen=true]{hyperref}

\newcommand{\rev}[1]{#1}
\newenvironment{revision}{}{}
\newcommand{\dens}{\operatorname{dens}}
\newcommand{\rank}{\operatorname{rank}}
\newcommand{\dist}{\operatorname{dist}}
\newcommand{\dpr}{d_{\mathrm{pr}}}
\newcommand{\cG}{\mathcal G}
\newcommand{\cE}{\mathcal E}
\newcommand{\cS}{\mathcal S}
\newcommand{\R}{\mathbb R}
\newcommand{\Pp}{\mathbb P}
\newcommand{\Ee}{\mathbb E}
\newcommand{\1}{\mathbf 1}

\newcommand{\thmref}[1]{\hyperref[#1]{Theorem~\ref*{#1}}}
\newcommand{\propref}[1]{\hyperref[#1]{Proposition~\ref*{#1}}}
\newcommand{\lemref}[1]{\hyperref[#1]{Lemma~\ref*{#1}}}
\newcommand{\corref}[1]{\hyperref[#1]{Corollary~\ref*{#1}}}
\newcommand{\remref}[1]{\hyperref[#1]{Remark~\ref*{#1}}}
\newcommand{\secref}[1]{\hyperref[#1]{Section~\ref*{#1}}}
\newcommand{\secrefs}[2]{%
  \hyperref[#1]{Sections~\ref*{#1}} and \hyperref[#2]{\ref*{#2}}}

\newtheorem{theorem}{Theorem}[section]
\newtheorem{proposition}[theorem]{Proposition}
\newtheorem{lemma}[theorem]{Lemma}
\newtheorem{corollary}[theorem]{Corollary}
\theoremstyle{remark}
\newtheorem{remark}[theorem]{Remark}

\title[Extreme least singular values beyond Gaussian entries]{Extreme least singular values of random row submatrices\\
with bounded-density subgaussian entries}

\author{Xiufan Yang}
\address{College of Science, Nanjing University of Posts and Telecommunications, Nanjing, Jiangsu, China}
\email{yxf3173616612@gmail.com}

\author{Shu Wen}
\address{International Institute of Finance, School of Management, University of Science and
Technology of China, Hefei, Anhui, China}
\email{wenshu433@gmail.com}

\author{\protect\rev{Yitzchak Shmalo}}
\address{\rev{Einstein Institute of Mathematics, The Hebrew University of Jerusalem, Givat Ram, Jerusalem, Israel}}
\email{\rev{yitzchak.shmalo@gmail.com}}
\thanks{Corresponding author: Yitzchak Shmalo.}

\subjclass[2020]{Primary 60B20; Secondary 15A18, 15B52, 42C15, 94A12}
\keywords{random matrices, least singular value, bounded density, subgaussian random variables,
random hyperplanes, phase retrieval, small-ball probability, \rev{uniform almost-sure convergence}}

\begin{document}

\begin{abstract}
\begin{revision}
Let $\xi$ be a centered real subgaussian random variable with positive variance and a bounded
Lebesgue density, and let $A_m\in\R^{N_m\times m}$ have independent entries distributed as
$\xi$, where $N_m/m\to\gamma>1$. Writing
\[
M_m(A_m):=\min_{I\subset[N_m],\,|I|=m}\sigma_{\min}((A_m)_I),
\]
where $(A_m)_I$ denotes the row submatrix indexed by $I$, we determine its exponential scale:
\[
\frac1m\log M_m(A_m)\xrightarrow{\Pp}
-\bigl(\gamma\log\gamma-(\gamma-1)\log(\gamma-1)\bigr).
\]
This extends the corresponding real Gaussian result in \cite{ShmaloGaussian}. The main new
point is an upper-tail argument that avoids uniform control of exponentially many random
hyperplanes. We combine a density-level local central limit theorem for delocalized directions,
an averaged delocalization estimate for hyperplane normals, an exponential bound for nearly
parallel pairs, and amplification by a linear number of independent probe rows. For each fixed
$\varepsilon\in(0,h(\gamma))$, the probability of an $\varepsilon$-deviation is at most
$Ce^{-c\sqrt m}$ for all sufficiently large $m$. Under the canonical coupling by one infinite
i.i.d.\ array, this summable deviation estimate yields a uniform almost-sure exponential law over
every compact range of aspect ratios. In particular, at the real phase-retrieval threshold
$N_m=2m-1$, the Balan--Wang stability parameter has exponential base $1/4$ in probability and,
under this coupling, almost surely.
\end{revision}
\end{abstract}

\maketitle

\section{Introduction}

Let $A\in\R^{N\times m}$ be a tall matrix. This paper studies the most ill-conditioned square row
submatrix of $A$, measured by
\begin{equation}\label{eq:def-M}
M_m(A):=\min_{I\subset[N],\,|I|=m}\sigma_{\min}(A_I),
\end{equation}
where $A_I$ is formed from the rows indexed by $I$. For background on least singular values of
\rev{i.i.d.} random matrices, see \cite{RVrectangular,TaoVu}. Even when the entries of $A$ are
independent, the family in \eqref{eq:def-M} contains exponentially many strongly dependent
matrices. The problem is therefore not reduced to a one-matrix estimate. Related exponential
conditioning questions occur in smoothed analyses of random polytopes and Frank--Wolfe methods;
see \cite{RademacherShu}.

For $\gamma>1$, set
\begin{equation}\label{eq:def-h}
h(\gamma):=\gamma\log\gamma-(\gamma-1)\log(\gamma-1).
\end{equation}
If $N_m/m\to\gamma$, then
\[
\binom{N_m}{m}=\exp\{h(\gamma)m+o(m)\}.
\]
The real Gaussian case was proved in \cite{ShmaloGaussian}:
\[
\frac1m\log M_m(A_m)\xrightarrow{\Pp}-h(\gamma),
\]
the same work obtained the complex Gaussian exponent $-h(\gamma)/2$. The lower-tail half of the real
argument extends to any bounded-density ensemble. The matching non-Gaussian upper tail was left
open in \cite[Remark~6.3]{ShmaloGaussian}, where such an extension was anticipated and the
remaining obstacle was identified as making the local and two-coordinate estimates uniform over
the exponentially many relevant random hyperplanes.

Our main theorem settles that real-valued extension.

\begin{theorem}[Exponential rate beyond Gaussian entries]\label{thm:main}
\begin{revision}
Let $\xi$ be a centered real subgaussian random variable with
\[
\sigma^2:=\operatorname{Var}(\xi)>0.
\]
Suppose that $\xi$ has a Lebesgue density $f_\xi$ with bounded essential supremum,
\[
K_0:=\|f_\xi\|_\infty<\infty.
\]
\end{revision}
Let $A_m\in\R^{N_m\times m}$ have independent entries distributed as $\xi$, and assume that
$N_m/m\to\gamma>1$. Then
\begin{equation}\label{eq:main-convergence}
\frac1m\log M_m(A_m)\xrightarrow{\Pp}-h(\gamma).
\end{equation}
Moreover, for every fixed $\varepsilon\in(0,h(\gamma))$ there are constants $C,c>0$, depending on
$\gamma$, $\varepsilon$, and the law of $\xi$, such that
\begin{equation}\label{eq:quantitative-main}
\Pp\left\{\left|\frac1m\log M_m(A_m)+h(\gamma)\right|>\varepsilon\right\}
\le Ce^{-c\sqrt m}
\end{equation}
for all sufficiently large $m$.
\end{theorem}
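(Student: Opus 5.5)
The proof has two halves, matching the two sides of the deviation event in \eqref{eq:quantitative-main}.

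\emph{Lower tail.} We show that $M_m(A_m)\ge e^{-(h(\gamma)+\varepsilon)m}$ with overwhelming probability, by a union bound over the $\binom{N_m}{m}=e^{h(\gamma)m+o(m)}$ subsets $I$. Since the entries have density bounded by $K_0$, the least singular value of a single square $m\times m$ matrix satisfies a small-ball estimate of the form $\Pp\{\sigma_{\min}(A_I)\le t\}\le (C\sqrt m\, t)^{c'}$ at polynomial scale. That is too weak to beat exponentially many subsets. Instead we use the distance formula $\sigma_{\min}(A_I)\ge m^{-1/2}\min_i \dist(R_i,H_{I\setminus i})$ together with a direct bound. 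Conditionally on the other rows, $\dist(R_i,H)=|\langle R_i,\nu\rangle|$ for a unit normal $\nu$. A linear form in independent variables with densities bounded by $K_0$ has density at most $CK_0$, by the Rogelin/Ball-type bound on sections of the cube for bounded-density sums. Hence
\[
\Pp\{\dist\le s\}\le C s .
\]
Taking $s=e^{-(h+\varepsilon)m}\sqrt m$ and summing over $I$ and over $i$ gives a bound of $e^{-\varepsilon m/2}$. This half follows the Gaussian argument and uses only bounded density.

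\emph{Upper tail.} This is the main obstacle. We must exhibit, with probability $1-Ce^{-c\sqrt m}$, some $I$ with $\sigma_{\min}(A_I)\le e^{-(h-\varepsilon)m}$. The natural route is a second-moment count. Fix the first $m-1$ rows of a candidate by choosing a subset $J$ of size $m-1$ from a first block of rows, with normal $\nu_J$. Count the pairs $(J,k)$, where $k$ ranges over the remaining rows, such that $|\langle R_k,\nu_J\rangle|\le \delta:=e^{-(h-\varepsilon)m}$; any such pair forces $\sigma_{\min}(A_{J\cup k})\le\delta$.

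To avoid uniform control over the exponentially many hyperplanes, split off a linear number $\eta m$ of independent probe rows that do not enter the definition of $J$. Three ingredients control the count $Z$ of pairs $(J,k)$ with $k$ a probe row.
\begin{enumerate}[label=(\roman*)]
\item \emph{Averaged delocalization.} Most normals $\nu_J$ (all but an $e^{-c\sqrt m}$ fraction in expectation) satisfy $\|\nu_J\|_\infty\le m^{-1/2+o(1)}$ and are not compressible. We prove this through the averaged statement $\Ee\#\{J:\nu_J \text{ localized}\}\le e^{-c\sqrt m}\binom{\cdot}{m-1}$, using the Rudelson--Vershynin compressible/incompressible decomposition for a single $J$ and linearity of expectation.
\item \emph{Density-level local CLT.} For a delocalized $\nu$, the density of $\langle R,\nu\rangle$ near $0$ is $(2\pi\sigma^2)^{-1/2}(1+o(1))$. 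Hence $\Ee Z\approx \binom{\cdot}{m-1}\,\eta m\,\delta\,c_\sigma=e^{\varepsilon m+o(m)}$.
\item \emph{Nearly parallel pairs.} For the variance, pairs $J,J'$ whose normals are nearly parallel contribute correlated events. We bound the number of such pairs by an exponential estimate, $\Pp\{|\langle\nu_J,\nu_{J'}\rangle|\ge1-\rho\}\le e^{-c|J\triangle J'|}$ on average. A two-dimensional local CLT, giving a joint density of $(\langle R,\nu_J\rangle,\langle R,\nu_{J'}\rangle)$ bounded by $C/\sqrt{1-\langle\nu_J,\nu_{J'}\rangle^2}$, then handles the non-parallel pairs.
\end{enumerate}
Paley--Zygmund then gives $\Pp\{Z>0\}\ge c$ conditionally on good normals. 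Amplification comes from partitioning the probe rows into $\sqrt m$ disjoint groups, or from reusing independent fresh blocks of rows. Since the rows are independent, the failure probability drops to $e^{-c\sqrt m}$. The loss of $\eta$ in $\gamma$ is absorbed by the continuity of $h$, shrinking $\varepsilon$.

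Finally, \eqref{eq:main-convergence} follows from \eqref{eq:quantitative-main}. I expect step (iii), the uniform two-coordinate control for nearly parallel normals in the non-Gaussian setting, to be the hardest part. I would attack it first via the averaged, rather than uniform, version: take expectations over $J,J'$ before conditioning, so that only typical pairs of hyperplanes need the local estimates.
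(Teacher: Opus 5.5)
Your architecture matches the paper's. The lower tail (the bound $\sigma_{\min}\ge m^{-1/2}\min_i d_i$, a one-dimensional density bound, and a union bound over $m\binom{N_m}{m}$ events) is exactly the paper's argument. The upper tail also follows the paper: reserved probe rows, averaged delocalization, a local lower bound, rarity of nearly parallel pairs, the bound $C/\sqrt{1-\rho^2}$, Paley--Zygmund, and amplification over about $\sqrt m$ independent groups. That density bound is just the Rudelson--Vershynin projection bound plus a change of variables, not a local CLT. However, two steps of the upper tail do not stand as written.

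\emph{Step (i) is false as stated.} Already for Gaussian entries $\nu_J$ is uniform on the sphere, and $\Pp\{\|\nu_J\|_\infty>m^{-1/2+\theta}\}$ is of order $e^{-m^{2\theta}/2}$ up to polynomial factors. This is far larger than $e^{-c\sqrt m}$ unless $\theta\ge1/4$, so Markov would give only an $e^{-m^{o(1)}}$ failure probability. Moreover, the compressible/incompressible decomposition gives no $\ell_\infty$ control: an incompressible vector may have a coordinate equal to $1/2$. So it cannot yield the asymptotic density $(2\pi\sigma^2)^{-1/2}(1+o(1))$. Fortunately you only need $\Pp\{|\langle X,\nu\rangle|\le t\}\ge ct$. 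The paper gets this from $\|\nu\|_\infty\le\eta$ with a fixed small $\eta$: Nguyen--Vu with $s=\lfloor\eta^2m\rfloor$ gives failure $e^{-cm}$ per hyperplane, then expectation and Markov, together with blocking and the Bobkov--G\"otze density CLT. Your incompressibility idea can also be repaired. Apply the local limit theorem only to the spread coordinates (at least $cm$ coordinates of size $\asymp m^{-1/2}$ carrying constant mass), then convolve with the independent remainder of bounded variance to get a density bounded below on $[-1,1]$.

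\emph{Step (iii) is the real gap.} It is precisely the obstacle left open in the Gaussian paper, and you give only the shape of an estimate. The paper's mechanism is as follows. Call $(S,T)$ exceptional if $|T\setminus S|\le\delta_0m$ (counted deterministically by binomial entropy) or if $\dpr(u_S,u_T)<a/\sqrt m$. In the second case, with $k=|T\setminus S|>\delta_0m$, condition on the rows of $S$, so that $u=u_S$ is fixed. Each fresh row $R_i$, $i\in T\setminus S$, annihilates $v=\pm u_T$. Hence $|\langle R_i,u\rangle|=|\langle R_i,u-v\rangle|\le\|R_i\|_2\,a/\sqrt m\le Ra$ off a row-norm event of probability $e^{-Am}$. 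Independence of the fresh rows from $u$ then gives $(2\sqrt2KRa)^k$.

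The scale $a/\sqrt m$ is forced by $\|R_i\|_2\sim\sqrt m$. At a constant threshold $\rho$ this bound is void, because $v$ depends on $R_i$; you would need an extra net argument over the cap around $u$ inside the $(k+1)$-dimensional space orthogonal to the shared rows. The exponent must also be tunable rather than a fixed $c$. The exceptional contribution $|\mathcal E|t_m$ to the second moment is negligible only if exceptional pairs form an $o(t_m)$ fraction of all pairs. Here $t_m=e^{-\alpha m}$ and $\alpha$ is close to $h(\beta)=\lim m^{-1}\log L_m$. The paper achieves this by shrinking $a$ until $(2\sqrt2KRa)^{\delta_0m}\le e^{-(A+1)m}$.

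Finally, at the $a/\sqrt m$ scale the pair bound costs a factor $\sqrt m$, so one probe succeeds only with probability $\gtrsim m^{-1/2}$. This is why each group needs about $\sqrt m$ probes and why the rate is $e^{-c\sqrt m}$. Your plan to ``take expectations before conditioning'' supplies none of these ingredients.
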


\begin{revision}
\begin{remark}[Normalization reduction]\label{rem:normalization}
Let $\sigma^2=\operatorname{Var}(\xi)>0$ and define
\[
\widehat\xi:=\frac{\xi}{\sigma},
\qquad
\widehat A_m:=\frac{1}{\sigma}A_m.
\]
Then $\widehat\xi$ is centered, variance one, and subgaussian. Its density is
\[
f_{\widehat\xi}(x)=\sigma f_\xi(\sigma x),
\qquad
\widehat K:=\|f_{\widehat\xi}\|_\infty=\sigma K_0<\infty.
\]
For every $I\subset[N_m]$ with $|I|=m$,
\[
(A_m)_I=\sigma(\widehat A_m)_I,
\qquad
\sigma_{\min}((A_m)_I)
=\sigma\,\sigma_{\min}((\widehat A_m)_I),
\]
and hence
\begin{equation}\label{eq:normalization-homogeneity}
M_m(A_m)=\sigma M_m(\widehat A_m).
\end{equation}
Consequently,
\begin{equation}\label{eq:normalization-log}
\frac1m\log M_m(A_m)
=\frac{\log\sigma}{m}+\frac1m\log M_m(\widehat A_m).
\end{equation}
Since $(\log\sigma)/m\to0$, the convergence in
\eqref{eq:main-convergence} for $A_m$ follows from the unit-variance statement for
$\widehat A_m$. The quantitative estimate is preserved as well. Indeed, for every fixed
$\varepsilon\in(0,h(\gamma))$ and all sufficiently large $m$,
$|\log\sigma|/m\le\varepsilon/2$, and therefore
\[
\begin{aligned}
&\Pp\left\{\left|\frac1m\log M_m(A_m)+h(\gamma)\right|>\varepsilon\right\}\\
&\qquad\le
\Pp\left\{\left|\frac1m\log M_m(\widehat A_m)+h(\gamma)\right|>
\frac\varepsilon2\right\}
\le Ce^{-c\sqrt m},
\end{aligned}
\]
where the last inequality is the unit-variance estimate applied to the law of
$\widehat\xi$. The constants depend on that normalized law, equivalently on the original law of
$\xi$. Thus it suffices to prove \thmref{thm:main} under the normalization
$\operatorname{Var}(\xi)=1$.
\end{remark}
\end{revision}

\paragraph{Comparison with the Gaussian argument.}
The Gaussian proof uses rotational invariance to identify the hyperplane normals with
Haar-distributed directions. That input is unavailable for general \rev{i.i.d.} entries. We replace
it by four weaker statements that are sufficient when used together.
\begin{enumerate}[label=(\roman*),leftmargin=2.3em]
\item A blocking argument and the density-level Berry--Esseen theorem of Bobkov and G\"otze
\cite{BobkovGoetze} imply a uniform lower bound
\[
\Pp\{|\langle u,X\rangle|\le t\}\ge ct
\]
for every sufficiently delocalized unit vector $u$ and every $0<t\le1$.
\item The normal-vector theorem of Nguyen and Vu \cite{NguyenVu}, used only in expectation,
shows that at least half of the random hyperplane normals are delocalized with high probability.
No union bound over all normals is taken.
\item Pairs of normals that are too close in projective distance occupy an exponentially small
fraction of all pairs. Highly overlapping row sets are handled by entropy, while low-overlap pairs
expose linearly many fresh rows and hence linearly many independent small-ball constraints.
\item A single independent probe row hits one of the good hyperplanes with probability at least of
order $m^{-1/2}$. Reserving $\Theta(m)$ independent probes amplifies this to probability
$1-e^{-\Theta(\sqrt m)}$.
\end{enumerate}
The first three ingredients control an averaged geometry rather than every individual hyperplane.
The fourth converts that averaged statement into the desired extremal conclusion. To the best of
our knowledge, the resulting matching upper bound was not previously available for non-Gaussian
bounded-density subgaussian entries. The complex non-Gaussian analogue remains open.

\begin{revision}
\subsection{Phase-retrieval background and the role of
\texorpdfstring{$\omega(A)$}{omega(A)}}\label{subsec:phase-background}

Given measurement vectors $a_1,\dots,a_N\in\R^m$, phase retrieval asks whether an unknown
signal $x\in\R^m$ can be reconstructed from the magnitudes
$|\langle a_i,x\rangle|$. Writing the $a_i^{\mathsf T}$ as the rows of
$A\in\R^{N\times m}$, the nonlinear measurement map is
\[
\mathcal A_A:\R^m/\{\pm1\}\longrightarrow\R_{\ge0}^N,
\qquad \mathcal A_A([x])=|Ax|,
\]
where the absolute value is taken coordinatewise. The quotient by $\{\pm1\}$ is unavoidable,
because $x$ and $-x$ have identical measurements. The natural distance on the quotient is
\[
d([x],[y]):=\min\{\|x-y\|_2,\|x+y\|_2\}.
\]
The finite-dimensional frame formulation was developed in
\cite{BalanCasazzaEdidin}; a broad account of uniqueness, stability, and the connection with
classical Fourier phase retrieval is given in \cite{GrohsKoppensteinerRathmair}. Convex lifting and
nonconvex spectral-gradient methods provide important algorithmic recovery guarantees for random
measurements \cite{PhaseLift,WirtingerFlow}, but the present paper addresses a different, purely
geometric question: the worst-case conditioning of the phaseless measurement map at the minimum
real redundancy.

In the real case, $\mathcal A_A$ is injective precisely when the rows of $A$ satisfy the
\emph{complement property}: for every $J\subset[N]$, either $A_J$ or $A_{J^c}$ has rank $m$
\cite{BalanCasazzaEdidin,BandeiraEtAl}. Consequently, real phase retrieval requires
$N\ge2m-1$. At the critical value $N=2m-1$, the complement property is equivalent to
\emph{full spark}, meaning that every $m$ rows are linearly independent. Thus a random matrix
with an absolutely continuous entry law is phase retrievable almost surely at this threshold.

Injectivity alone does not quantify robustness. Following Balan and Wang \cite{BalanWang}, define
for a phase-retrievable matrix $A$
\[
q_\varepsilon(A):=
\sup_{\|x\|_2=1}\ \sup_{\substack{y\in\R^m\\
\||Ax|-|Ay|\|_2\le\varepsilon}}
\frac{d([x],[y])}{\varepsilon},
\qquad
q_0(A):=\limsup_{\varepsilon\downarrow0}q_\varepsilon(A).
\]
For $J\subset[N]$, let $A_J$ denote the corresponding row submatrix. The Balan--Wang parameter is
\begin{equation}\label{eq:omega}
\omega(A):=
\min_{\substack{J\subset[N]\\ \rank(A_J)<m}}
\sigma_m(A_{J^c}),
\end{equation}
where $\sigma_m$ is the least singular value of a matrix having at least $m$ rows. Balan and Wang
proved that
\[
q_0(A)=\frac1{\omega(A)}
\]
for every real phase-retrievable frame \cite[Theorem~4.2]{BalanWang}. Hence $\omega(A)$ is the
reciprocal of the sharp infinitesimal worst-case conditioning factor: an exponentially small value
of $\omega(A)$ means exponentially poor worst-case local stability. At the critical threshold,
\propref{prop:full-spark-reduction} reduces this stability parameter exactly to the extreme
least-singular-value statistic in \eqref{eq:def-M}.
\end{revision}

\begin{proposition}[Full-spark reduction]\label{prop:full-spark-reduction}
If $A\in\R^{(2m-1)\times m}$ is full spark, then
\[
\omega(A)=\min_{\substack{T\subset[2m-1]\\|T|=m}}\sigma_{\min}(A_T)=M_m(A).
\]
\end{proposition}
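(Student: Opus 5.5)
The plan is to compute $\omega(A)$ directly from the definition \eqref{eq:omega} by sorting the sets $J$ according to their size, and then to use two elementary facts about least singular values. Here $A\in\R^{(2m-1)\times m}$ is full spark and $N=2m-1$.

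\emph{Step 1: which $J$ are admissible.} If $|J|\ge m$, then $A_J$ contains $m$ rows, which are linearly independent by full spark, so $\rank(A_J)=m$. Such $J$ are excluded from the minimum in \eqref{eq:omega}. If $|J|\le m-1$, then $\rank(A_J)\le|J|<m$, so $J$ is admissible. Its complement satisfies $|J^c|=2m-1-|J|\ge m$, so $\sigma_m(A_{J^c})$ is defined. Therefore
\[
\omega(A)=\min_{|J|\le m-1}\sigma_m(A_{J^c})=\min_{\substack{S\subset[2m-1]\\ |S|\ge m}}\sigma_m(A_S),
\]
where $S=J^c$ runs over all subsets of size at least $m$.

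\emph{Step 2: monotonicity under adding rows.} If $T\subset S$, then for every $x$ we have $\|A_Sx\|_2^2=\|A_Tx\|_2^2+\|A_{S\setminus T}x\|_2^2\ge\|A_Tx\|_2^2$. Taking the infimum over unit vectors $x$ gives $\sigma_m(A_S)\ge\sigma_m(A_T)$, by the variational characterization $\sigma_m(B)=\min_{\|x\|_2=1}\|Bx\|_2$, which is valid for any $B$ with at least $m$ rows.

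\emph{Step 3: the two inequalities.} Every set $S$ with $|S|\ge m$ contains some $T$ with $|T|=m$, so by Step 2
\[
\min_{|S|\ge m}\sigma_m(A_S)\ge\min_{|T|=m}\sigma_m(A_T).
\]
Conversely, the sets $T$ with $|T|=m$ are themselves among the sets $S$, which gives the reverse inequality. For square $A_T$, $\sigma_m(A_T)=\sigma_{\min}(A_T)$, and the resulting minimum is exactly $M_m(A)$ in \eqref{eq:def-M}.

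The only point needing care is Step 1. Full spark is what excludes every $J$ with $|J|\ge m$, and the critical size $N=2m-1$ is what guarantees $|J^c|\ge m$ for every remaining $J$. Without full spark, some $J$ with $|J|\ge m$ and $\rank(A_J)<m$ could enter the minimum; its complement could then have fewer than $m$ rows, and the reduction would fail. Apart from this, the argument is routine.
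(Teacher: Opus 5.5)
Your proof is correct and follows essentially the same route as the paper. In both, full spark makes the admissible $J$ exactly those with $|J|\le m-1$. The row-addition monotonicity $\|A_Sx\|_2^2\ge\|A_Tx\|_2^2$, which is the paper's Gram inequality $A_{J^c}^{\mathsf T}A_{J^c}\succeq A_T^{\mathsf T}A_T$, then gives $\omega(A)\ge M_m(A)$, and taking $J=T^c$ with $|J|=m-1$ gives the reverse inequality.
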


\begin{revision}
\begin{proof}
For a full-spark matrix, $\rank(A_J)<m$ holds exactly when $|J|\le m-1$; hence every admissible
complement $J^c$ contains at least $m$ rows. If $T\subset J^c$ and $|T|=m$, then
\[
A_{J^c}^{\mathsf T}A_{J^c}
=A_T^{\mathsf T}A_T+A_{J^c\setminus T}^{\mathsf T}A_{J^c\setminus T}
\succeq A_T^{\mathsf T}A_T,
\]
so $\sigma_m(A_{J^c})\ge\sigma_{\min}(A_T)$. Taking the minimum over admissible $J$ gives
$\omega(A)\ge M_m(A)$. Conversely, for every $m$-set $T$, the complement $J=T^c$ has size
$m-1$ and is rank deficient, while $J^c=T$. Thus $\omega(A)\le\sigma_{\min}(A_T)$; minimizing
over $T$ proves the reverse inequality.
\end{proof}
\end{revision}

Combining \propref{prop:full-spark-reduction} with \thmref{thm:main} gives the
following non-Gaussian extension of the critical exponential base.

\begin{corollary}[Balan--Wang base for bounded-density subgaussian frames]\label{cor:phase-prob}
Let $A_m\in\R^{(2m-1)\times m}$ have independent entries distributed as a random variable $\xi$
satisfying the assumptions of \thmref{thm:main}. Then
\[
\frac1m\log\omega(A_m)\xrightarrow{\Pp}-\log4,
\qquad
\omega(A_m)=4^{-m+o_{\Pp}(m)}.
\]
Let $R_m:=\max_{1\le i\le2m-1}\|a_i\|_2$, where $a_i$ are the rows of $A_m$. Then, for every
$b>1/4$,
\[
\Pp\{\omega(A_m)\le R_m b^m\}\longrightarrow1,
\]
whereas for every $0<b<1/4$ and every fixed $C>0$,
\[
\Pp\{\omega(A_m)\le C R_m b^m\}\longrightarrow0.
\]
\end{corollary}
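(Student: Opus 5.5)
The plan is to derive the corollary from \propref{prop:full-spark-reduction} and \thmref{thm:main} with $N_m=2m-1$, so that $N_m/m\to\gamma=2$. First I check that $A_m$ is full spark almost surely. Every $m\times m$ row submatrix has i.i.d.\ absolutely continuous entries. Its determinant is a nonzero polynomial in those entries, so it vanishes only on a Lebesgue-null set. Since the entry law has a density, each determinant is nonzero almost surely, and a finite union bound over the $\binom{2m-1}{m}$ subsets gives full spark with probability one. On this event \propref{prop:full-spark-reduction} gives $\omega(A_m)=M_m(A_m)$. Next I compute $h(2)=2\log2-\log1=\log4$. Then \eqref{eq:main-convergence} yields $\frac1m\log\omega(A_m)\to-\log4$ in probability. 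This is exactly the statement $\omega(A_m)=4^{-m+o_{\Pp}(m)}$.

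For the thresholded statements I also need the scale of $R_m$. The claim is that $\frac1m\log R_m\to0$ in probability. For the upper bound, subgaussianity together with Bernstein's inequality applied to $\|a_i\|_2^2$, followed by a union bound over $2m-1$ rows, gives $R_m\le C\sqrt m$ with probability tending to one. For the lower bound, $R_m\ge\|a_1\|_2$, and $\|a_1\|_2^2/m\to\sigma^2>0$ by the law of large numbers, so $R_m\ge c\sqrt m$ with high probability. Hence $\log R_m=O(\log m)=o(m)$ in probability.

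Now fix $b>1/4$ and write $\log b=-\log4+2\delta$ with $\delta>0$. With probability tending to one we have both $\frac1m\log\omega(A_m)\le-\log4+\delta$ and $\frac1m\log R_m\ge-\delta$. On that event
\[
\tfrac1m\log\omega(A_m)\le-\log4+\delta=\log b-\delta\le\log b+\tfrac1m\log R_m,
\]
which means $\omega(A_m)\le R_mb^m$. Similarly, for $0<b<1/4$ write $\log b=-\log4-2\delta$. With probability tending to one, $\frac1m\log\omega(A_m)\ge-\log4-\delta$ and $\frac1m\log(CR_m)\le\delta/2$ (here $\log C/m\to0$). Then
\[
\tfrac1m\log\bigl(CR_mb^m\bigr)\le\log b+\delta/2<-\log4-\delta\le\tfrac1m\log\omega(A_m),
\]
so the event $\{\omega(A_m)\le CR_mb^m\}$ has probability tending to zero.

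No step is a real obstacle here: all the difficulty sits in \thmref{thm:main}. The only points needing care are the almost-sure full-spark claim and the $o(m)$ control of $\log R_m$. For the latter, the crude bounds $c\sqrt m\le R_m\le C\sqrt m$ with high probability are more than enough.
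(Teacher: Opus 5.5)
Your proposal is correct and follows the paper's proof. Both arguments establish full spark almost surely, use \propref{prop:full-spark-reduction} to get $\omega(A_m)=M_m(A_m)$, apply \thmref{thm:main} with $h(2)=\log4$, and show $\frac1m\log R_m\to0$ in probability before comparing exponents. The differences are cosmetic. You pin $R_m$ at scale $\sqrt m$ via Bernstein and the weak law of large numbers, whereas the paper only proves $e^{-\eta m}\le R_m\le e^{\eta m}$, using $R_m\ge|(A_m)_{11}|$ with the density bound and a subgaussian union bound. You also skip the variance normalization, which is legitimate because \thmref{thm:main} already allows any positive variance.
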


\begin{revision}
\begin{corollary}[Uniform almost-sure law under the canonical coupling]
\label{cor:almost-sure}
Let $(\xi_{ij})_{i,j\ge1}$ be an infinite array of independent copies of $\xi$. For $\gamma>1$, set
\[
\begin{aligned}
N_m(\gamma)&:=\lceil\gamma m\rceil-1,\\
A_m(\gamma)&:=(\xi_{ij})_{1\le i\le N_m(\gamma),\,1\le j\le m},\\
\mathcal M_m(\gamma)&:=M_m(A_m(\gamma)).
\end{aligned}
\]
Then, for every compact interval $K\subset(1,\infty)$,
\[
\sup_{\gamma\in K}
\left|\frac1m\log\mathcal M_m(\gamma)+h(\gamma)\right|
\longrightarrow0
\qquad\text{almost surely}.
\]
In particular, the exponential law holds simultaneously for every $\gamma>1$ on one
full-probability event.

At the critical real phase-retrieval threshold, $A_m(2)\in\R^{(2m-1)\times m}$. If
$R_m:=\max_{1\le i\le2m-1}\|(\xi_{ij})_{j=1}^m\|_2$, then almost surely
\[
\omega(A_m(2))^{1/m}\longrightarrow\frac14,
\qquad
R_m^{1/m}\longrightarrow1.
\]
Consequently, for every $b>1/4$, the inequality $\omega(A_m(2))\le R_m b^m$ holds for all
sufficiently large $m$ almost surely; for every $0<b<1/4$ and every fixed $C>0$, the reverse
inequality $\omega(A_m(2))>C R_m b^m$ holds for all sufficiently large $m$ almost surely.
\end{corollary}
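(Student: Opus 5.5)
The plan is to derive \corref{cor:almost-sure} from the quantitative estimate \eqref{eq:quantitative-main} via Borel--Cantelli, with a monotonicity/interpolation argument in $\gamma$ to get uniformity over a compact $K=[\gamma_0,\gamma_1]\subset(1,\infty)$.

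\textbf{Step 1: a finite grid.} Fix $\eta>0$. Since $h$ is continuous and increasing on $(1,\infty)$ (indeed $h'(\gamma)=\log\frac{\gamma}{\gamma-1}>0$), I choose a finite grid $\gamma_0=g_0<g_1<\dots<g_L=\gamma_1$ with $h(g_{k+1})-h(g_k)<\eta$. For each fixed $g_k$, the matrix $A_m(g_k)$ has $N_m(g_k)=\lceil g_km\rceil-1$ rows, so $N_m/m\to g_k$ and \thmref{thm:main} applies. Taking $\varepsilon=\eta$ (after shrinking $\eta<h(\gamma_0)$), I get
\[
\Pp\Bigl\{\Bigl|\tfrac1m\log\mathcal M_m(g_k)+h(g_k)\Bigr|>\eta\Bigr\}\le C_ke^{-c_k\sqrt m}.
\]
This bound is summable in $m$, so Borel--Cantelli gives, almost surely, $|\frac1m\log\mathcal M_m(g_k)+h(g_k)|\le\eta$ for all large $m$ and all $k\le L$. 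Only the marginal law of each $A_m(g_k)$ matters here, so the coupling across $m$ is irrelevant for this step.

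\textbf{Step 2: monotonicity in $\gamma$.} Under the canonical coupling, $A_m(\gamma)$ is a row submatrix of $A_m(\gamma')$ whenever $\gamma\le\gamma'$, because $N_m(\cdot)$ is nondecreasing. Every $m$-subset of the smaller row set is an admissible $m$-subset of the larger one, hence $\mathcal M_m(\gamma)\ge\mathcal M_m(\gamma')$, i.e.\ $\gamma\mapsto\mathcal M_m(\gamma)$ is nonincreasing. For $\gamma\in[g_k,g_{k+1}]$ this gives
\[
-h(g_{k+1})-\eta\le\tfrac1m\log\mathcal M_m(g_{k+1})\le\tfrac1m\log\mathcal M_m(\gamma)\le\tfrac1m\log\mathcal M_m(g_k)\le-h(g_k)+\eta .
\]
Combined with $h(g_k)\le h(\gamma)\le h(g_{k+1})$, we get $\sup_K|\frac1m\log\mathcal M_m(\gamma)+h(\gamma)|\le2\eta$ eventually. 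Taking $\eta=1/j$ and intersecting countably many full-probability events proves the uniform statement. Exhausting $(1,\infty)$ by the countably many intervals $[1+1/n,n]$ gives one event valid for every $\gamma>1$. The main (mild) obstacle is exactly this step: pointwise a.s.\ convergence does not give uniformity or simultaneity over uncountably many $\gamma$. Monotonicity of the coupled family together with continuity of $h$ is what handles it. One must also make sure that the constants in Step 1 are needed only at finitely many grid points, which they are.

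\textbf{Step 3: phase retrieval.} $N_m(2)=2m-1$, and since the entries have a density, $A_m(2)$ is full spark almost surely for each $m$ (a countable intersection). So by \propref{prop:full-spark-reduction}, $\omega(A_m(2))=\mathcal M_m(2)$, and Step 1 gives $\omega^{1/m}\to e^{-h(2)}=e^{-2\log2}=1/4$ almost surely. For $R_m$, a lower bound $R_m\ge|\xi_{11}|>0$ holds a.s. For the upper bound, $\|(\xi_{ij})_j\|_2$ is a norm of a subgaussian vector, so $\Pp\{R_m>m\}\le2m\,e^{-cm}$ (say, by Bernstein for $\sum_j\xi_{ij}^2$). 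This is summable, so $R_m\le m$ eventually and $R_m^{1/m}\to1$.

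\textbf{Step 4: the final inequalities.} They follow by taking $m$-th roots. If $b>1/4$, then $(\omega/R_m)^{1/m}\to1/4<b$, so $\omega\le R_mb^m$ eventually. If $b<1/4$, then $(\omega/(CR_m))^{1/m}\to1/4>b$, so $\omega>CR_mb^m$ eventually.
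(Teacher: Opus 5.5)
Your proposal is correct and follows essentially the same route as the paper: summable deviation bounds from \thmref{thm:main} with Borel--Cantelli at countably many aspect ratios, monotonicity of $\gamma\mapsto\mathcal M_m(\gamma)$ under the coupling together with continuity of $h$ to interpolate, and then the full-spark reduction plus a Borel--Cantelli bound on $R_m$. The differences are only cosmetic. You use a finite grid for each compact interval and a countable exhaustion, where the paper uses all rational $q$ at once; and your lower bound $R_m\ge|\xi_{11}|>0$, with $\xi_{11}$ fixed across $m$, is slightly more direct than the paper's summable bound $\Pp\{|\xi_{11}|<e^{-\eta m}\}\le 2Ke^{-\eta m}$.
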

\end{revision}

\begin{remark}[Scope of the assumptions]\label{rem:scope}
The assumptions in \thmref{thm:main} are sufficient and are not asserted to be optimal.
Absolute continuity ensures that all square row submatrices and all $(m-1)$-row spans have the
expected rank almost surely, so the logarithm and the normal lines used below are well defined. The
bounded-density hypothesis supplies the uniform one- and two-dimensional anti-concentration
estimates and the density-level local approximation. Subgaussianity supplies exponentially strong
row-norm tails and the available normal-vector delocalization theorem. We do not claim that the
conclusion fails for atomic laws; discrete ensembles lie outside the present method and would
require different anti-concentration and normal-vector inputs.
\end{remark}

\rev{The proof is organized as follows. \secref{sec:preliminaries} establishes the local
small-ball lower bound. \secref{sec:geometry} controls the number and pair geometry of useful
hyperplanes. \secrefs{sec:lower-tail}{sec:upper-tail} prove the two deviations around
the rate $h(\gamma)$. \secref{sec:conclusion} derives the main theorem, its phase-retrieval
consequence, and the almost-sure strengthening.}

\section{Preliminaries and local anti-concentration}\label{sec:preliminaries}

\begin{revision}
By the normalization reduction in \remref{rem:normalization}, throughout the proof sections
we assume $\Ee\xi=0$ and $\operatorname{Var}(\xi)=1$. To avoid additional notation, we relabel
$\widehat\xi$ and $\widehat A_m$ as $\xi$ and $A_m$, and write
\[
K:=\|f_\xi\|_\infty
\]
for the density bound of the normalized law.
\end{revision}

\rev{All logarithms are natural. For a real random variable $Y$, let $\dens(Y)$ denote its
Lebesgue density and let $\|\dens(Y)\|_\infty$ denote the essential supremum of that density. We
use the Orlicz convention}
\[
\rev{\|Y\|_{\psi_2}:=\inf\bigl\{s>0:\Ee\exp(Y^2/s^2)\le2\bigr\},}
\]
\rev{and say that $Y$ is subgaussian when $\|Y\|_{\psi_2}<\infty$.}

Throughout the proof, constants denoted by $c,C,C_1,\dots$ may change from line to line.
Constants in distributional lemmas depend only on the law of $\xi$; constants in tail estimates may
also depend on fixed parameters such as $\gamma$ and $\varepsilon$, but never on $m$.

For unit vectors $u,v\in\R^m$, define their projective distance by
\begin{equation}\label{eq:projective-distance}
\dpr(u,v):=\min\{\|u-v\|_2,\|u+v\|_2\}.
\end{equation}

\begin{revision}
We first record the projection-density theorem of Rudelson and Vershynin
\cite[Theorems~1.1 and 1.2]{RudelsonVershyninDensity}. The distinction between its general
$d$-dimensional form and its sharp one-dimensional form is important for the two-dimensional
estimate below.

\begin{theorem}[Densities of projections]\label{thm:projection-density}
Let $X=(X_1,\dots,X_n)$ have independent real-valued coordinates whose densities are bounded by
$K$ almost everywhere. If $P$ is the orthogonal projection in $\R^n$ onto a $d$-dimensional
subspace, then the density of $PX$, with respect to $d$-dimensional Lebesgue measure on that
subspace, is bounded by $(C_{\mathrm{RV}}K)^d$ almost everywhere, where
$C_{\mathrm{RV}}>0$ is an absolute constant. In dimension $d=1$, the sharper bound
$\sqrt2K$ holds.
\end{theorem}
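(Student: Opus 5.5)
Since this theorem is quoted from \cite{RudelsonVershyninDensity}, here is the route I would take to reprove it. The sharp one-dimensional bound and the general $d$-dimensional bound need different arguments. Throughout, write $P=\sum_{i}u_iu_i^{\mathsf T}$ restricted to the range, where $u_i:=Pe_i$. Then $\sum_i u_iu_i^{\mathsf T}=I_d$ on $\operatorname{ran}P$, $\sum_i\|u_i\|_2^2=d$, $\|u_i\|_2\le1$, and $PX=\sum_i X_iu_i$.

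\emph{Case $d=1$.} Here $PX=\langle a,X\rangle$ with $a$ a unit vector. I would first reduce to uniform coordinates by Rogozin's extremal inequality. Among independent summands with $\|\dens(a_iX_i)\|_\infty\le K/|a_i|$, the supremum of the density of the sum is maximized, up to the stated form, when each summand is uniform on an interval of length $|a_i|/K$. Equivalently, it suffices to treat $X_i$ uniform on $[-\tfrac1{2K},\tfrac1{2K}]$. Rescaling, the density of $\langle a,X\rangle$ at $s$ is $K$ times the $(n-1)$-volume of the section of the unit cube $[-\tfrac12,\tfrac12]^n$ by the hyperplane $\{x:\langle a,x\rangle=sK^{-1}\cdot K\}$. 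By Ball's cube-slicing theorem, central sections have volume at most $\sqrt2$. Non-central sections of a symmetric convex body are no larger than central ones, by Brunn's concavity principle. Together these give the bound $\sqrt2K$.

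\emph{General $d$.} I would argue through the characteristic function, after smoothing each coordinate slightly so that everything is integrable; the bound is stable under this smoothing. With $\varphi_i$ the characteristic function of $X_i$,
\[
\|\dens(PX)\|_\infty\le(2\pi)^{-d}\int_{\operatorname{ran}P}\prod_i|\varphi_i(\langle t,u_i\rangle)|\,dt .
\]
Set $c_i:=\|u_i\|_2^2$ and $v_i:=u_i/\|u_i\|_2$. Then $\sum c_iv_iv_i^{\mathsf T}=I_d$, which is exactly the geometric Brascamp--Lieb condition. Applying Brascamp--Lieb to $g_i:=|\varphi_i|^{1/c_i}$ gives
\[
\int\prod_i g_i(\langle t,v_i\rangle)^{c_i}\,dt\le\prod_i\Bigl(\int_{\R}|\varphi_i(s)|^{1/c_i}ds\Bigr)^{c_i}.
\]
When $1/c_i\ge2$, Hausdorff--Young together with $\|f_i\|_{p'}\le K^{1/p}$ (using $\|f_i\|_1=1$ and $\|f_i\|_\infty\le K$) gives $\int|\varphi_i|^{p}\le (CK)^{p-1}$ with $p=1/c_i$. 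Hence each factor is at most $(CK)^{1-c_i}\le (CK)\cdot(CK)^{-c_i}$. The product over $i$ would then be $(CK)^{\#}$, where $\#$ counts the factors. That count is not yet $(CK)^{d}$, so the bookkeeping must instead use the normalized form $\bigl(\int|\varphi_i|^p\bigr)^{1/p}\le C K^{1-1/p}\cdot$(a scale factor). This should combine with $\sum c_i=d$ to produce $(CK)^{d}$; I would check this exponent count carefully.

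\emph{Main obstacle.} The hard part is the coordinates with large weight $c_i>1/2$, for which $p=1/c_i<2$ and Hausdorff--Young fails. There are at most $2d$ such coordinates. I would condition on the remaining coordinates and split $P$ accordingly. The heavy coordinates span at most a $2d$-dimensional space, and on it the density of their contribution can be bounded by a direct change of variables using $\|f_i\|_\infty\le K$ and a volume (determinant) estimate. The light coordinates are then handled by the Brascamp--Lieb step above, after renormalizing their frame on the complementary directions. Since convolving with an independent vector never increases the sup-norm of a density, the two bounds can be combined. The delicate point is making the frame renormalization lose only a factor $C^d$.
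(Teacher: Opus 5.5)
The paper does not prove this statement; it quotes it from Rudelson--Vershynin. Your $d=1$ argument is correct and is the standard route to $\sqrt2K$: Rogozin's reduction to uniform summands, then Ball's cube-slicing bound, then Brunn's principle for off-center sections.

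\textbf{The light case has a genuine gap.} Since $u_i=\sqrt{c_i}\,v_i$, the integrand is $\prod_i|\varphi_i(\sqrt{c_i}\,\langle t,v_i\rangle)|$, not $\prod_i|\varphi_i(\langle t,v_i\rangle)|$. Brascamp--Lieb must therefore be applied to $g_i(s)=|\varphi_i(\sqrt{c_i}\,s)|^{1/c_i}$, which yields
\[
\|\dens(PX)\|_\infty\le(2\pi)^{-d}\prod_i c_i^{-c_i/2}\Bigl(\int_\R|\varphi_i(s)|^{1/c_i}\,ds\Bigr)^{c_i}.
\]
Your Hausdorff--Young consequence is also misstated. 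The bound $\|\varphi\|_p\le C\|f\|_{p'}\le CK^{1/p}$ gives $\int|\varphi|^p\le C^pK$, which is linear in $K$ as the scaling $X\mapsto\lambda X$ forces, not $(CK)^{p-1}$. More simply, $\int|\varphi|^p\le\int|\varphi|^2=2\pi\|f\|_2^2\le2\pi K$ for $p\ge2$.

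Any such $p$-independent bound still leaves the factor $\prod_ic_i^{-c_i/2}=\exp\bigl(\tfrac12\sum_ic_i\log(1/c_i)\bigr)$. This reaches $(n/d)^{d/2}$ when all $c_i=d/n$; for $d=1$ and $a=n^{-1/2}(1,\dots,1)$ you would only get $\sqrt n\,K$. So the problem is not exponent bookkeeping. The missing ingredient is the decay estimate $\int_\R|\varphi_i|^p\le CK/\sqrt p$ for $p\ge2$, the analogue of Ball's $\int_\R|\sin x/x|^p\,dx\le\pi\sqrt{2/p}$, which exactly cancels $c_i^{-c_i/2}$.

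You can derive it from your own $d=1$ result. The function $|\varphi_i|^{2k}$ is the characteristic function of $Y_1+\dots+Y_k$, where the $Y_\ell$ are i.i.d.\ copies of $X_i-X_i'$, each with density at most $K$. Hence $\int|\varphi_i|^{2k}=2\pi\,\dens(Y_1+\dots+Y_k)(0)\le2\pi\sqrt2K/\sqrt k$, and $|\varphi_i|\le1$ handles non-even $p$.

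\textbf{The heavy-coordinate step fails as described,} because no uniform determinant bound exists. Heavy vectors can be nearly dependent while still fitting into $\sum_iu_iu_i^{\mathsf T}=I$. Take $e,f$ orthonormal in the range and $2a\cos^2(\theta/2)=1$. Then $u_{1,2}=\sqrt a\,(\cos(\theta/2)\,e\pm\sin(\theta/2)\,f)$ are heavy for every $\theta>0$, and $u_1u_1^{\mathsf T}+u_2u_2^{\mathsf T}=ee^{\mathsf T}+\tan^2(\theta/2)\,ff^{\mathsf T}$; the remainder is supplied by many small multiples of $f$. For $X_1,X_2$ uniform on intervals of length $1/K$, the heavy part $X_1u_1+X_2u_2$ has density $K^2/\tan(\theta/2)$, which is unbounded as $\theta\to0$. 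Yet $PX$ itself obeys the theorem, because the light part smooths in the direction $f$.

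The same example defeats your combination step. ``Convolution does not increase the sup-norm'' requires one summand to have a bounded density on the whole range, and here neither summand does. Moreover, when there are more heavy indices than the dimension of their span, no change of variables is available at all.

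\textbf{The fix is to peel off one heavy direction at a time and induct on $d$.} If $\|u_j\|_2^2>1/2$, let $v=u_j/\|u_j\|_2$ and $E'=\operatorname{ran}P\cap v^\perp$.
\begin{enumerate}[label=(\roman*)]
\item $P_{E'}X$ does not involve $X_j$, since $P_{E'}e_j=P_{E'}Pe_j=P_{E'}u_j=0$.
\item Conditionally on $(X_i)_{i\ne j}$, one has $\langle PX,v\rangle=\|u_j\|_2X_j+\mathrm{const}$, whose density is at most $K/\|u_j\|_2<\sqrt2K$.
\end{enumerate}
Hence $\|\dens(PX)\|_\infty\le\sqrt2K\,\|\dens(P_{E'}X)\|_\infty$. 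By induction on $d$ it suffices to treat projections with $\|Pe_i\|_2^2\le1/2$ for all $i$. There the corrected Fourier--Brascamp--Lieb bound, together with the decay estimate above, gives $(CK)^d$, and no frame renormalization is needed.
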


\begin{lemma}[One- and two-dimensional marginal bounds]\label{lem:marginal-density}
Let $\xi$ be as in \thmref{thm:main}, and let $X=(\xi_1,\dots,\xi_m)$ have independent
coordinates distributed as $\xi$. Then, for every deterministic unit vector $u\in\R^m$,
\begin{equation}\label{eq:one-dimensional-density}
\|\dens(\langle X,u\rangle)\|_\infty\le\sqrt2K.
\end{equation}
Furthermore, for unit vectors $u,v\in\R^m$ with $\rho=\langle u,v\rangle$ and $|\rho|<1$,
\begin{equation}\label{eq:two-dimensional-density}
\|\dens(\langle X,u\rangle,\langle X,v\rangle)\|_\infty
\le \frac{C_{\mathrm{RV}}^2K^2}{\sqrt{1-\rho^2}}.
\end{equation}
\end{lemma}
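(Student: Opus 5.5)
The plan is to derive both estimates directly from \thmref{thm:projection-density}, applied in dimension $d=1$ and $d=2$. For \eqref{eq:one-dimensional-density}, take $P=uu^{\mathsf T}$, the orthogonal projection onto $\operatorname{span}(u)$. The map $t\mapsto tu$ is an isometry from $\R$ onto this line, so the density of $PX$ with respect to one-dimensional Lebesgue measure on the line is exactly the density of $\langle X,u\rangle$ on $\R$. The sharp one-dimensional bound $\sqrt2K$ from \thmref{thm:projection-density} then gives \eqref{eq:one-dimensional-density} at once.

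For \eqref{eq:two-dimensional-density}, since $|\rho|<1$ the vectors $u,v$ span a two-dimensional subspace $E$. Let $P$ be the orthogonal projection onto $E$ and fix an orthonormal basis $e_1,e_2$ of $E$. Then the vector $Z:=(\langle X,e_1\rangle,\langle X,e_2\rangle)\in\R^2$ is the coordinate representation of $PX$, obtained by an isometry. Hence by \thmref{thm:projection-density} with $d=2$, $\|\dens(Z)\|_\infty\le C_{\mathrm{RV}}^2K^2$.

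Next, $(\langle X,u\rangle,\langle X,v\rangle)=BZ$, where $B$ is the $2\times2$ matrix whose rows are the coordinates of $u$ and $v$ in the basis $e_1,e_2$. This holds because $u,v\in E$, so $\langle X,u\rangle=\langle PX,u\rangle$, and similarly for $v$. By the change-of-variables formula, $\dens(BZ)(y)=\dens(Z)(B^{-1}y)/|\det B|$. Moreover,
\[
|\det B|^2=\det(BB^{\mathsf T})=\det\begin{pmatrix}1&\rho\\ \rho&1\end{pmatrix}=1-\rho^2,
\]
the Gram determinant of $u$ and $v$. Combining these gives \eqref{eq:two-dimensional-density}.

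There is no real obstacle. The only points needing care are that the density in \thmref{thm:projection-density} is taken with respect to Lebesgue measure on the subspace, which matches the coordinate density under an isometric identification, and that the nondegeneracy $|\rho|<1$ guarantees $B$ is invertible. The factor $(1-\rho^2)^{-1/2}$ enters solely through the Jacobian. The general $d=2$ constant $C_{\mathrm{RV}}^2$ is used here because the sharp $\sqrt2$ bound is only available in dimension one.
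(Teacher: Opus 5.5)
Your proposal is correct and follows the paper's proof: the sharp $d=1$ projection bound for the first estimate, then the $d=2$ bound for orthonormal coordinates composed with a linear change of variables whose Jacobian is $\sqrt{1-\rho^2}$. The only cosmetic difference is that the paper fixes the basis $e_1=u$, $e_2=(v-\rho u)/\sqrt{1-\rho^2}$, which makes $B$ lower triangular. You instead use an arbitrary orthonormal basis and compute $|\det B|$ through the Gram determinant; both are fine.
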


\begin{proof}
Let $E=\operatorname{span}(u)$ and let $P_E$ be the orthogonal projection onto $E$. Since
$P_EX=\langle X,u\rangle u$, the isometry $t\mapsto tu$ identifies the density of $P_EX$ on $E$
with the density of $\langle X,u\rangle$. The sharp one-dimensional part of
\thmref{thm:projection-density} therefore gives \eqref{eq:one-dimensional-density}.

For the two-dimensional estimate, let $E=\operatorname{span}(u,v)$ and choose the orthonormal
basis
\[
e_1=u,
\qquad
e_2=\frac{v-\rho u}{\sqrt{1-\rho^2}}.
\]
Set $Y=(\langle X,e_1\rangle,\langle X,e_2\rangle)$. The general $d=2$ part of
\thmref{thm:projection-density} gives
\[
\|\dens(Y)\|_\infty\le C_{\mathrm{RV}}^2K^2.
\]
Moreover,
\[
(\langle X,u\rangle,\langle X,v\rangle)=BY,
\qquad
B=\begin{pmatrix}1&0\\ \rho&\sqrt{1-\rho^2}\end{pmatrix},
\]
and $|\det B|=\sqrt{1-\rho^2}$. The change-of-variables formula for densities yields
\[
\|\dens(BY)\|_\infty
\le \frac{\|\dens(Y)\|_\infty}{|\det B|},
\]
which is exactly \eqref{eq:two-dimensional-density}.
\end{proof}
\end{revision}

We next extract the one-dimensional form of the density-level Berry--Esseen theorem needed below.

\begin{theorem}[Density-level Berry--Esseen theorem]\label{thm:density-BE}
Let $Y_1,\dots,Y_r$ be independent centered real random variables such that
\[
\sum_{\ell=1}^r\operatorname{Var}(Y_\ell)=r.
\]
Assume that, for fixed positive constants $b,M,B$, every $\ell$ satisfies
\[
0<\operatorname{Var}(Y_\ell)\le b,
\qquad
\|\dens(Y_\ell)\|_\infty\le M,
\qquad
\Ee|Y_\ell|^3\le B.
\]
If $p_r$ is the density of $r^{-1/2}\sum_{\ell=1}^rY_\ell$ and $\varphi$ is the standard Gaussian
density, then
\begin{equation}\label{eq:density-BE}
\|p_r-\varphi\|_{L^\infty(\R)}\le\frac{C(b,M,B)}{\sqrt r}.
\end{equation}
\end{theorem}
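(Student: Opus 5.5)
We will obtain \eqref{eq:density-BE} by Fourier inversion, which is the standard route to local limit theorems, combined with a way of exploiting the uniform density bound to control large frequencies. Set $S_r:=r^{-1/2}\sum_\ell Y_\ell$ and $\phi_\ell(t):=\Ee e^{itY_\ell}$. If the characteristic function $\psi_r(t)=\prod_\ell\phi_\ell(t/\sqrt r)$ of $S_r$ is integrable, inversion gives
\[
\|p_r-\varphi\|_\infty\le\frac1{2\pi}\int_\R|\psi_r(t)-e^{-t^2/2}|\,dt .
\]
Integrability is not automatic, since a single bounded density need not have an integrable Fourier transform. We will handle this as Bobkov and G\"otze do. Since $\|\dens(Y_\ell)\|_\infty\le M$ and the density integrates to one, Plancherel gives $\int|\phi_\ell|^2\le 2\pi M$. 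By H\"older's inequality, a product of any $3$ of the $|\phi_\ell|$ is then integrable once we use $|\phi_\ell|\le1$. So for $r\ge3$ the function $\psi_r$ is integrable, and we apply the bound above, splitting the integral into three frequency ranges.

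\emph{Low frequencies, $|t|\le c_0\sqrt r$.} We run the classical Berry--Esseen Taylor expansion with third moments. Each factor satisfies $|\phi_\ell(s)-1+\tfrac12\operatorname{Var}(Y_\ell)s^2|\le B|s|^3/6$. With $\sum\operatorname{Var}(Y_\ell)=r$ and $\operatorname{Var}(Y_\ell)\le b$, the standard argument gives
\[
|\psi_r(t)-e^{-t^2/2}|\le C\frac{B}{\sqrt r}|t|^3e^{-t^2/4}
\]
for $|t|\le c_0\sqrt r$, with $c_0=c_0(b,B)$. Integrating this bound contributes $O(r^{-1/2})$.

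\emph{Gaussian tail.} The contribution $\int_{|t|>c_0\sqrt r}e^{-t^2/2}\,dt$ is $e^{-cr}$, which is harmless.

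\emph{High frequencies, $|t|>c_0\sqrt r$.} This is the main obstacle. We need a uniform gap $\sup_{|s|\ge c_0}|\phi_\ell(s)|\le 1-\kappa$ with $\kappa=\kappa(b,M,B)>0$. We will obtain it from the density bound. We also need a lower variance bound, but only on average: on many indices. The hypotheses give $\operatorname{Var}(Y_\ell)>0$ with no uniform lower bound. However, $\sum\operatorname{Var}(Y_\ell)=r$ together with $\operatorname{Var}(Y_\ell)\le b$ forces at least $r/(2b)$ indices to have variance at least $1/2$.

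For the gap itself, a density bounded by $M$ yields $\operatorname{Var}(Y_\ell)\ge 1/(12M^2)$ automatically, since a bounded density forces spread. So every variable is spread out, and we use the identity
\[
1-|\phi_\ell(s)|^2=\Ee\bigl(1-\cos(s(Y-Y'))\bigr),
\]
where $Y'$ is an independent copy of $Y_\ell$. The symmetrized difference $Y-Y'$ has density at most $M$. Therefore the set where $\cos(s(Y-Y'))>1-\eta$ has small probability: this set is a union of intervals of length $O(\sqrt\eta/|s|)$ around the points $2\pi k/|s|$. Restricting to $|Y-Y'|\le R$, where $R$ is chosen via Chebyshev from the variance bound $2b$, the set has probability at most about $(R|s|/2\pi+1)\cdot M\sqrt\eta/|s|$. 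For $|s|\ge c_0$ this is at most $1/2$ once $\eta$ is small. This gives a uniform gap $\kappa$ for all $|s|\ge c_0$.

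We then bound the high-frequency integral by pulling out three factors:
\[
\int_{|t|>c_0\sqrt r}|\psi_r|\le (1-\kappa)^{r-3}\sqrt r\int\prod_{j=1}^3|\phi_{\ell_j}(s)|\,ds .
\]
The remaining integral is controlled by the H\"older/Plancherel bound, $\int|\phi|^{3}\le\int|\phi|^2\le2\pi M$. Altogether the high-frequency range contributes $C\sqrt r\,e^{-\kappa r}=O(r^{-1/2})$.

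Summing the three contributions gives \eqref{eq:density-BE} for $r\ge3$. For small $r$ the bound is trivial: $\|p_r\|_\infty$ is at most $\sqrt r\,M$, since convolution does not increase the sup of a density, and the constant $C(b,M,B)$ absorbs this case. Alternatively, one may simply cite Bobkov--G\"otze's density CLT \cite{BobkovGoetze} for non-identically distributed summands and check that its constants depend only on $(b,M,B)$. The argument above is the verification of that dependence.
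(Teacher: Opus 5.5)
Your argument is correct, but it takes a different route from the paper. The paper's proof is a direct citation: it quotes Bobkov--G\"otze \cite{BobkovGoetze}, Theorem~1, equation~(1.1), which gives $\|p_r-\varphi\|_{L^\infty}\le C\sigma M^2\beta_3/\sqrt r$ with $\sigma^2=\max_\ell\operatorname{Var}(Y_\ell)$ and $\beta_3=r^{-1}\sum_\ell\Ee|Y_\ell|^3$. It then inserts $\sigma\le\sqrt b$ and $\beta_3\le B$.

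You instead reprove the needed special case by Fourier inversion, in three steps:
\begin{itemize}
\item Plancherel plus Cauchy--Schwarz for integrability of $\psi_r$;
\item the classical Lyapunov-ratio Taylor estimate (with ratio at most $B/\sqrt r$) on $|t|\le c_0\sqrt r$;
\item a uniform Cram\'er-type gap at high frequency, extracted from the bounded density of $Y-Y'$.
\end{itemize}
The citation buys brevity and an explicit polynomial constant $C\sqrt b\,M^2B$. Your route is self-contained and shows directly why only $(b,M,B)$ enter. The cost is a non-explicit constant routed through $\kappa$ and the factor $\sqrt r\,e^{-\kappa r}$, which is more than enough for the single use of this estimate in \lemref{lem:local-lower}.

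Two small points of housekeeping.

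First, the lower-variance facts you record are never used. These are $\operatorname{Var}(Y_\ell)\ge1/(12M^2)$ and the count of at least $r/(2b)$ indices with variance at least $1/2$. The gap needs only $\|\dens(Y-Y')\|_\infty\le M$, together with Chebyshev for $\operatorname{Var}(Y-Y')\le2b$. The resulting bound, of order $\sqrt\eta\,M(R+1/|s|)$, is uniform over $|s|\ge c_0$ because it decreases in $|s|$.

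Second, the three pulled-out factors are different characteristic functions. So bound their integral by
$\int|\phi_{\ell_1}\phi_{\ell_2}\phi_{\ell_3}|\le\|\phi_{\ell_1}\|_2\|\phi_{\ell_2}\|_2\le2\pi M$
rather than by $\int|\phi|^3$. In particular, two factors already give integrability, so $r\ge2$ suffices for the inversion step.
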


\begin{revision}
\begin{proof}
In dimension one, \cite[Theorem~1, equation~(1.1)]{BobkovGoetze} gives
\[
\|p_r-\varphi\|_{L^\infty(\R)}
\le \frac{C\sigma M^2\beta_3}{\sqrt r},
\]
where $\sigma^2=\max_\ell\operatorname{Var}(Y_\ell)$ and
$\beta_3=r^{-1}\sum_\ell\Ee|Y_\ell|^3$. The assumptions imply
$\sigma\le\sqrt b$ and $\beta_3\le B$, which yields \eqref{eq:density-BE} after absorbing these
fixed quantities into $C(b,M,B)$.
\end{proof}
\end{revision}

\rev{Combining \lemref{lem:marginal-density} with \thmref{thm:density-BE} gives the
local lower bound used in the upper-tail argument. The point is that, when $u$ is delocalized, no
single summand $u_j\xi_j$ contributes appreciably to $\langle u,X\rangle$. The resulting sum lies
in a local central-limit regime, so its density is uniformly positive near the origin.}

\begin{lemma}[Uniform local lower bound]\label{lem:local-lower}
Let $\xi$ and $X$ be as in \lemref{lem:marginal-density}. There are constants $\eta,c_0>0$,
depending only on the law of $\xi$, such that for every $m$, every unit vector $u\in\R^m$ satisfying
$\|u\|_\infty\le\eta$, and every $0<t\le1$,
\begin{equation}\label{eq:local-lower}
\Pp\{|\langle u,X\rangle|\le t\}\ge2c_0t.
\end{equation}
\end{lemma}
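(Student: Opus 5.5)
The plan is to show that the density of $S=\langle u,X\rangle$ is bounded below by a positive constant on $[-1,1]$. Then $\Pp\{|S|\le t\}\ge 2t\cdot\inf_{|x|\le1}\dens(S)(x)$. Applying \thmref{thm:density-BE} directly to the summands $u_j\xi_j$ fails, because their densities $|u_j|^{-1}f_\xi(\cdot/u_j)$ are not uniformly bounded when $u_j$ is small. Some coordinates may also vanish. I would therefore use a blocking argument: group the coordinates so that each block sum carries a definite amount of variance. Its density is then bounded by \lemref{lem:marginal-density}, while its third moment is controlled by subgaussianity.

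\textbf{Blocking.} Let $\eta$ be small, to be chosen. Greedily partition $[m]$ into consecutive blocks $J_1,\dots,J_r$ so that $w_\ell:=\sum_{j\in J_\ell}u_j^2$ lies in $[\delta,2\delta]$, where $\delta$ is a fixed small constant. Any leftover mass smaller than $\delta$ is merged into the last block. This is possible because each $u_j^2\le\eta^2\le\delta$. Then $r\asymp 1/\delta$ and $\sum_\ell w_\ell=1$. Set $Y_\ell:=\sum_{j\in J_\ell}u_j\xi_j$, so that $\operatorname{Var}(Y_\ell)=w_\ell$ and the $Y_\ell$ are independent.

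A fixed number $r$ of blocks gives no $r^{-1/2}$ decay. To make $r$ large, I would instead take $\delta=\eta^2$, chosen so that $r\approx\eta^{-2}$ is large. Now rescale: $Z_\ell:=Y_\ell/\sqrt{w_\ell}\cdot\sqrt{\bar w}$, or more simply normalize by writing $S=r^{-1/2}\sum_\ell \widetilde Y_\ell$ with $\widetilde Y_\ell:=\sqrt r\,Y_\ell$. Then $\sum_\ell\operatorname{Var}(\widetilde Y_\ell)=r$, and $\operatorname{Var}(\widetilde Y_\ell)=rw_\ell\in[r\eta^2,2r\eta^2+\dots]$, which is bounded above and below by absolute constants. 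The remaining hypotheses of \thmref{thm:density-BE} are checked as follows.
\begin{itemize}
\item Density: $\widetilde Y_\ell=\sqrt{rw_\ell}\,\langle X_{J_\ell},u_{J_\ell}/\sqrt{w_\ell}\rangle$, so by \eqref{eq:one-dimensional-density} its density is at most $\sqrt2K/\sqrt{rw_\ell}\le C K$.
\item Third moment: $\langle X_{J_\ell},v\rangle$ for a unit vector $v$ is subgaussian with $\psi_2$-norm bounded in terms of $\|\xi\|_{\psi_2}$ (Hoeffding). Hence $\Ee|\widetilde Y_\ell|^3\le B$.
\end{itemize}
\thmref{thm:density-BE} then gives $\|\dens(S)-\varphi\|_\infty\le C_1/\sqrt r\le C_2\eta$.

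\textbf{Conclusion.} On $[-1,1]$ we have $\varphi\ge\varphi(1)=e^{-1/2}/\sqrt{2\pi}$. Choosing $\eta$ so that $C_2\eta\le\varphi(1)/2$ yields $\dens(S)\ge\varphi(1)/2$ almost everywhere on $[-1,1]$. Integrating over $[-t,t]$ gives \eqref{eq:local-lower} with $c_0=\varphi(1)/2$.

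\textbf{Main obstacle.} The main difficulty is the bookkeeping in the blocking step. The variance ratios $rw_\ell$ must stay in a fixed interval, uniformly in $m$ and $u$; this needs $\eta^2\le\delta$ together with the merging of the remainder. Because the constant in \thmref{thm:density-BE} depends on $b,M,B$, these must be fixed before $\eta$ is chosen. I would also check that the Berry--Esseen constant does not depend on $r$ except through $r^{-1/2}$, which is what makes the choice of $\eta$ close.
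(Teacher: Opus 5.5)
Your proposal is correct and follows essentially the same route as the paper: greedy blocking into blocks of weight of order $\eta^2$ (the paper uses threshold $1/q=4\eta^2$, you use $\eta^2$), rescaling by $\sqrt r$ so the block variances stay in a fixed interval, bounding block densities by the one-dimensional projection bound and third moments by subgaussianity, then applying the density-level Berry--Esseen theorem with $r$ of order $\eta^{-2}$, which yields $c_0=\varphi(1)/2$. Your uniform treatment of singleton blocks through the $\sqrt2K$ bound is valid. Your remark that $b,M,B$ are fixed independently of $\eta$ before $\eta$ is chosen is exactly what the paper's choice of $q$ relies on.
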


\begin{proof}
Fix an integer $q\ge1$, to be chosen sufficiently large in terms of the distribution of $\xi$. We
first assume that
\[
\max_{1\le j\le m}u_j^2\le\frac1{4q}.
\]
Set $w_j:=u_j^2$. Since $\|u\|_2=1$, we have $\sum_{j=1}^mw_j=1$.

We partition the nonzero weights $w_j$ into consecutive blocks by the following greedy procedure.
Starting with an empty block, add nonzero weights until their sum first becomes at least $1/q$, then
close the block and repeat with the remaining weights. Since each weight is at most $1/(4q)$, every
completed block has total weight between $1/q$ and $5/(4q)$. At the end there may remain a
terminal collection of total mass less than $1/q$; merge it into the last completed block.
\rev{Because the total mass is one, at least one block is completed.} We obtain nonempty disjoint
blocks $B_1,\dots,B_r$ whose union contains all $j$ with $u_j\ne0$, and, with
$w_\ell:=\sum_{j\in B_\ell}u_j^2$,
\[
\frac1q\le w_\ell\le\frac9{4q},
\qquad
\sum_{\ell=1}^rw_\ell=1.
\]
The upper bound follows because the last completed block has weight at most $5/(4q)$ before the
terminal remainder, while the remainder has weight less than $1/q$. Consequently,
\[
\frac{4q}{9}\le r\le q.
\]

For $\ell=1,\dots,r$, define
\[
Z_\ell:=\sqrt r\sum_{j\in B_\ell}u_j\xi_j.
\]
The variables $Z_1,\dots,Z_r$ are independent and centered. Since
$\operatorname{Var}(\xi)=1$,
\[
\operatorname{Var}(Z_\ell)=r\sum_{j\in B_\ell}u_j^2=rw_\ell,
\]
and therefore
\[
\frac49\le\operatorname{Var}(Z_\ell)\le\frac94,
\qquad
\sum_{\ell=1}^r\operatorname{Var}(Z_\ell)=r.
\]

We next establish a uniform density bound. If $B_\ell$ contains at least two indices, put
$a_j:=u_j/\sqrt{w_\ell}$ for $j\in B_\ell$. Then $\sum_{j\in B_\ell}a_j^2=1$, so
\lemref{lem:marginal-density} gives
\[
\left\|\dens\left(\sum_{j\in B_\ell}a_j\xi_j\right)\right\|_\infty\le\sqrt2K.
\]
Since
\[
Z_\ell=\sqrt{rw_\ell}\sum_{j\in B_\ell}a_j\xi_j,
\]
the scaling rule for densities yields
\[
\|\dens(Z_\ell)\|_\infty
\le\frac{\sqrt2K}{\sqrt{rw_\ell}}
\le\frac{3\sqrt2}{2}K.
\]
If $B_\ell=\{j\}$ is a singleton, then $Z_\ell=\sqrt r\,u_j\xi_j$, and
\[
\|\dens(Z_\ell)\|_\infty
\le\frac{K}{\sqrt r\,|u_j|}
=\frac{K}{\sqrt{rw_\ell}}
\le\frac32K.
\]
Thus $\|\dens(Z_\ell)\|_\infty\le C_1K$ in all cases.

The third moments are also uniformly bounded. Since $\xi$ is subgaussian and the coordinates are
independent and centered, the standard subgaussian inequality for linear combinations
\cite[Proposition~2.6.1]{VershyninBook} gives
\[
\left\|\sum_{j\in B_\ell}u_j\xi_j\right\|_{\psi_2}
\le C\|\xi\|_{\psi_2}\left(\sum_{j\in B_\ell}u_j^2\right)^{1/2}.
\]
Consequently,
\[
\|Z_\ell\|_{\psi_2}
\le C\|\xi\|_{\psi_2}\sqrt{rw_\ell}
\le\frac{3C}{2}\|\xi\|_{\psi_2},
\]
so $\Ee|Z_\ell|^3\le C_2$, where $C_2$ depends only on the law of $\xi$.

We may therefore apply \thmref{thm:density-BE} with
$b=9/4$, $M=C_1K$, and $B=C_2$. Since
\[
\frac1{\sqrt r}\sum_{\ell=1}^rZ_\ell
=\sum_{j=1}^mu_j\xi_j
=\langle u,X\rangle,
\]
if $p_u$ denotes the density of $\langle u,X\rangle$, then
\[
\|p_u-\varphi\|_{L^\infty(\R)}
\le\frac{C_3}{\sqrt r}
\le\frac{3C_3}{2\sqrt q}.
\]
Choose $q$ sufficiently large that $3C_3/(2\sqrt q)\le\varphi(1)/2$. Then, for almost every
$|x|\le1$,
\[
p_u(x)\ge\varphi(x)-\frac{\varphi(1)}2\ge\frac{\varphi(1)}2.
\]
Set
\[
\eta:=\frac1{2\sqrt q},
\qquad
c_0:=\frac{\varphi(1)}2.
\]
If $\|u\|_\infty\le\eta$, then $\max_ju_j^2\le1/(4q)$, so all preceding estimates apply. Finally,
for every $0<t\le1$,
\[
\Pp\{|\langle u,X\rangle|\le t\}
=\int_{-t}^tp_u(x)\,dx
\ge\int_{-t}^tc_0\,dx
=2c_0t.
\]
\end{proof}

\section{Geometry of random hyperplane normals}\label{sec:geometry}

Because the entry law is absolutely continuous, with probability one every row family of size at
most $m$ that appears below has the expected rank. We work on this full-probability event. To make
the normal-vector selection explicit, for an $(m-1)$-set $S$ let $B_S$ be the corresponding row
matrix, with its rows ordered by increasing index. If $B_S^{(j)}$ is obtained by deleting column
$j$, set
\[
c_S:=\bigl((-1)^{j-1}\det B_S^{(j)}\bigr)_{j=1}^m,
\qquad
u_S:=\frac{c_S}{\|c_S\|_2}.
\]
Then $c_S\ne0$, $B_Sc_S=0$, and $u_S$ is a measurable unit normal. All events used below are
unchanged if a normal is replaced by its negative.

The following is the largest-coordinate part of the normal-vector theorem of Nguyen and Vu
\cite[Theorem~1.4, equation~(6)]{NguyenVu}.

\begin{theorem}[Nguyen--Vu]\label{thm:NV}
Let $B$ be an $(m-1)\times m$ matrix with independent entries distributed as a centered,
variance-one subgaussian random variable, and let $u$ be a unit normal to its rows. There are
constants $C,C_1>0$, depending only on the entry law, such that, for all sufficiently large $m$
and every integer $s\ge C_1\log m$,
\begin{equation}\label{eq:NV}
\Pp\left\{\|u\|_\infty\ge\sqrt{\frac{s}{m}}\right\}
\le Cm^2e^{-s/C}.
\end{equation}
\end{theorem}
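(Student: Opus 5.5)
This statement is the largest-coordinate part of \cite[Theorem~1.4, equation~(6)]{NguyenVu}, so the plan is to derive it from that source. We check that the hypotheses of Nguyen--Vu match ours, and we translate their normalization into \eqref{eq:NV}.

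\emph{Matching the hypotheses.} Nguyen and Vu treat an $(m-1)\times m$ matrix whose entries are i.i.d.\ copies of a centered, variance-one subgaussian variable, and this is our setting after \remref{rem:normalization}. The normal vector $u$ is determined up to sign on the full-probability event where $B$ has rank $m-1$. Since $\|u\|_\infty=\|-u\|_\infty$, the event in \eqref{eq:NV} does not depend on which normal is chosen, and so it applies to the measurable normal $u_S$ defined above.

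\emph{Translating the bound.} Their estimate (6) controls the probability that some coordinate of $u$ exceeds $\sqrt{s/m}$ in absolute value. It gives a tail of the form $m^{O(1)}e^{-s/C}$, valid for $s$ at least a constant multiple of $\log m$. If the source states the bound per coordinate, or per fixed set of coordinates, we take a union bound over the $m$ coordinates. That costs at most one extra factor of $m$, which explains the prefactor $m^2$. If the source uses a slightly different threshold, such as $\sqrt{s/m}$ with an implicit constant, we rescale $s$ by a constant factor and absorb the change into $C$ and $C_1$. Requiring $s\ge C_1\log m$ puts us in the range where their estimate holds, and there the exponential factor beats any polynomial loss.

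\emph{Main obstacle.} The only real work is the bookkeeping in the citation. We must confirm that the constants depend only on the subgaussian moment of the entry law (no bounded density is needed here), and that the polynomial prefactor is at most $m^2$. If the prefactor were larger, say $m^k$, we would enlarge $C_1$ and $C$ so that $m^{k-2}e^{-s/(2C)}\le 1$ for all $s\ge C_1\log m$. The inequality would then still hold in the stated form.
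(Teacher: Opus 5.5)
Your proposal takes the same approach as the paper, which gives no independent proof of this statement and imports it directly as the largest-coordinate part of \cite[Theorem~1.4, equation~(6)]{NguyenVu}; your bookkeeping is sound (sign invariance of $\|u\|_\infty$, a union bound over coordinates if needed, and absorbing polynomial prefactors or constant rescalings of $s$ into $C$ and $C_1$). The one imprecision is calling the rank-$(m-1)$ event full-probability: that needs absolute continuity, which holds in the paper's application (\lemref{lem:many-good}) but not for the general subgaussian law in the statement, where the rank-deficient event must instead be absorbed because it is exponentially small, i.e.\ at most $e^{-cm}\le e^{-cs}$ in the only relevant range $s\le m$.
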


\thmref{thm:NV} provides a delocalization estimate for the unit normal of a single random
hyperplane. The following lemma shows that, with high probability, at least half of the hyperplanes
generated by the random rows have delocalized normals.

\begin{lemma}[Many hyperplanes have delocalized \rev{normals}]\label{lem:many-good}
Let $n=n_m\ge m$, let $R_1,\dots,R_n$ be independent rows with \rev{i.i.d.} coordinates
distributed as $\xi$, and for every $S\subset[n]$ with $|S|=m-1$ let $u_S$ be a unit normal to the
rows indexed by $S$. Put
\[
\cS:=\{S\subset[n]:|S|=m-1\},
\qquad
L:=|\cS|=\binom{n}{m-1},
\]
and, with $\eta$ from \lemref{lem:local-lower}, define the set of \rev{good hyperplanes}
\[
\cG:=\{S\in\cS:\|u_S\|_\infty\le\eta\}.
\]
Then there are constants $C,c>0$ such that
\begin{equation}\label{eq:many-good}
\Pp\{|\cG|<L/2\}\le Cm^2e^{-cm}
\end{equation}
for all sufficiently large $m$.
\end{lemma}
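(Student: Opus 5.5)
The plan is to bound the expected number of bad hyperplanes and then apply Markov's inequality; no union bound over the $L$ normals is needed. Set $\cS_{\mathrm{bad}}:=\cS\setminus\cG$. By linearity of expectation,
\[
\Ee|\cS_{\mathrm{bad}}|=\sum_{S\in\cS}\Pp\{\|u_S\|_\infty>\eta\}.
\]
For each fixed $S$, the rows indexed by $S$ form an $(m-1)\times m$ matrix $B_S$ with i.i.d.\ entries distributed as $\xi$. By the normalization of \remref{rem:normalization}, $\xi$ is centered with variance one and subgaussian. So every summand equals the same quantity $p_m:=\Pp\{\|u\|_\infty>\eta\}$, where $u$ is a unit normal of a single random $(m-1)\times m$ matrix. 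The choice of sign of the normal is irrelevant, and almost surely the normal line is unique. Hence $\Ee|\cS_{\mathrm{bad}}|=Lp_m$.

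To bound $p_m$, we invoke \thmref{thm:NV} with $s:=\lfloor\eta^2m\rfloor$. Then $\sqrt{s/m}\le\eta$, so
\[
\{\|u\|_\infty>\eta\}\subset\{\|u\|_\infty\ge\sqrt{s/m}\}.
\]
The admissibility condition $s\ge C_1\log m$ holds for all sufficiently large $m$, because $\eta>0$ is a fixed constant from \lemref{lem:local-lower} depending only on the law of $\xi$. Thus
\[
p_m\le Cm^2e^{-s/C}\le C'm^2e^{-\eta^2m/C}.
\]
This exponential smallness of $p_m$ in $m$ is the substantive input. The only point needing care is checking that the hypotheses of \thmref{thm:NV} (entry law, large $m$, the lower bound on $s$) are met uniformly; I expect this to be routine, not a genuine obstacle.

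Finally, the event $\{|\cG|<L/2\}$ is the same as $\{|\cS_{\mathrm{bad}}|>L/2\}$. By Markov's inequality,
\[
\Pp\{|\cS_{\mathrm{bad}}|>L/2\}\le\frac{2\Ee|\cS_{\mathrm{bad}}|}{L}=2p_m\le 2C'm^2e^{-cm},
\]
with $c=\eta^2/C$. This gives \eqref{eq:many-good}. The bound is uniform in $n\ge m$, because the factor $L$ cancels exactly. This cancellation is why the averaged argument avoids the exponentially many dependent normals: their dependence plays no role in the expectation.
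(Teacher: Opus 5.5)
Your proposal is correct and follows essentially the same route as the paper. Both apply \thmref{thm:NV} with $s=\lfloor\eta^2m\rfloor$ to bound each $\Pp\{S\notin\cG\}$ by $Cm^2e^{-cm}$, then use linearity of expectation and Markov's inequality on the number of bad hyperplanes, so the factor $L$ cancels and no union bound over the dependent normals is needed.
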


\begin{proof}
Fix $S\in\cS$. The matrix whose rows are indexed by $S$ is an $(m-1)\times m$ random matrix
with independent entries distributed as $\xi$, so \thmref{thm:NV} applies to $u_S$. Let
$s_m:=\lfloor\eta^2m\rfloor$. \rev{For all sufficiently large $m$, one has
$s_m\ge C_1\log m$ and $\sqrt{s_m/m}\le\eta$.} Consequently,
\[
\{S\notin\cG\}=\{\|u_S\|_\infty>\eta\}
\subseteq
\left\{\|u_S\|_\infty\ge\sqrt{\frac{s_m}{m}}\right\}.
\]
Applying \thmref{thm:NV} gives
\[
\Pp\{S\notin\cG\}\le Cm^2e^{-s_m/C}.
\]
For all sufficiently large $m$, $s_m\ge\eta^2m/2$, and hence
\[
\Pp\{S\notin\cG\}\le Cm^2e^{-cm}.
\]
Let
\[
N_{\mathrm{bad}}:=|\cS\setminus\cG|
=\sum_{S\in\cS}\1_{\{S\notin\cG\}}.
\]
By linearity of expectation,
\[
\Ee N_{\mathrm{bad}}
=\sum_{S\in\cS}\Pp\{S\notin\cG\}
\le LCm^2e^{-cm}.
\]
Since $|\cG|<L/2$ implies $N_{\mathrm{bad}}>L/2$, Markov's inequality yields
\[
\Pp\{|\cG|<L/2\}
\le\frac{2\Ee N_{\mathrm{bad}}}{L}
\le2Cm^2e^{-cm}.
\]
Absorbing the factor $2$ into $C$ proves the claim.
\end{proof}

We next control the geometry of pairs. The following entropy estimate shows that pairs of row sets
with very large overlap form an exponentially negligible fraction.

\begin{lemma}[Binomial entropy]\label{lem:entropy}
Suppose $n_m/m\to\beta>1$ and let
\[
L_m:=\binom{n_m}{m-1}.
\]
Then
\begin{equation}\label{eq:L-entropy}
\frac1m\log L_m\longrightarrow h(\beta).
\end{equation}
Moreover, for every $\alpha\in(0,h(\beta))$ there are constants $\delta_0,\kappa>0$ such that, for all
sufficiently large $m$,
\begin{equation}\label{eq:overlap-entropy}
\frac1{L_m}
\sum_{1\le k\le\lfloor\delta_0m\rfloor}
\binom{m-1}{k}\binom{n_m-m+1}{k}
\le e^{-(\alpha+4\kappa)m}.
\end{equation}
\end{lemma}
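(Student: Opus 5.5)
The proof has two parts: Stirling's formula for \eqref{eq:L-entropy}, and a comparison between a small-$k$ overlap count and $L_m$ for \eqref{eq:overlap-entropy}.

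\emph{First limit.} Set $k=m-1$ and note $n_m/k\to\beta$. The standard estimate
\[
\log\binom{n}{k}=nH(k/n)+O(\log n),\qquad H(p)=-p\log p-(1-p)\log(1-p),
\]
follows from Stirling's formula or from the elementary bounds $(n+1)^{-1}e^{nH(k/n)}\le\binom nk\le e^{nH(k/n)}$. With $k/n_m\to1/\beta$ it gives
\[
\frac1m\log L_m\to \beta H(1/\beta).
\]
A direct computation shows $\beta H(1/\beta)=\beta\log\beta-(\beta-1)\log(\beta-1)=h(\beta)$, which is \eqref{eq:L-entropy}.

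\emph{Second bound.} The summand satisfies
\[
\binom{m-1}{k}\binom{n_m-m+1}{k}\le\binom{m}{k}\binom{n_m}{k},
\]
and for $k\le\delta m$ with $\delta\le1/2$ the right-hand side is increasing in $k$. Bound each binomial by $\binom{n}{k}\le(en/k)^k$, or better by $e^{nH(k/n)}$. Since $n_m\le2\beta m$ for large $m$, the total sum is at most
\[
(\delta_0m+1)\exp\bigl\{m\bigl(H(\delta_0)+2\beta H(\delta_0/(2\beta))\bigr)\bigr\}.
\]
Write $g(\delta):=H(\delta)+2\beta H(\delta/(2\beta))$; then $g(\delta)\to0$ as $\delta\downarrow0$. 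By the first part, $L_m\ge e^{m(h(\beta)-\tau)}$ for large $m$, for any fixed $\tau>0$.

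\emph{Choice of constants.} Given $\alpha<h(\beta)$, let $\kappa:=(h(\beta)-\alpha)/12$ and $\tau:=\kappa$. Choose $\delta_0\in(0,1/2)$ with $g(\delta_0)\le\kappa$. The ratio is then at most
\[
(\delta_0m+1)\,e^{m(\kappa+\kappa-h(\beta))}=(\delta_0m+1)\,e^{-(\alpha+10\kappa)m},
\]
and the factor $(\delta_0m+1)\le e^{\kappa m}$ for large $m$. This yields the bound $e^{-(\alpha+9\kappa)m}\le e^{-(\alpha+4\kappa)m}$, as claimed.

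The only real care needed is keeping the entropy estimates uniform in $k$ up to $\delta_0m$ while $n_m$ varies with $m$. Using the crude bound $n_m\le2\beta m$ together with the monotonicity of $H$ on $[0,1/2]$ handles this, so I do not expect a genuine obstacle.
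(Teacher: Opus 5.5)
Your proof is correct and follows essentially the same route as the paper: Stirling for $\frac1m\log L_m\to h(\beta)$, then a per-term bound whose exponential rate vanishes as $\delta_0\downarrow0$, multiplied by the polynomial number of terms and divided by $L_m\ge e^{(h(\beta)-\tau)m}$. The paper uses $\binom pk\le(ep/k)^k$ where you use the entropy bound $e^{nH(k/n)}$.

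One small point: the inequality $n_mH(k/n_m)\le2\beta m\,H(\delta_0/(2\beta))$ relies on $x\mapsto xH(k/x)$ being increasing, not on monotonicity of $H$ alone. The cruder bound $n_mH(k/n_m)\le2\beta m\,H(\delta_0)$, which needs only monotonicity of $H$, would serve equally well.
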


\begin{proof}
Write $\beta_m:=n_m/m$, so $\beta_m\to\beta$. Stirling's formula gives
\[
\begin{aligned}
\frac1m\log L_m
&=\frac1m\bigl(\log(n_m!)-\log((m-1)!)-\log((n_m-m+1)!)\bigr)\\
&=\beta_m\log\beta_m-(\beta_m-1)\log(\beta_m-1)+o(1),
\end{aligned}
\]
which proves \eqref{eq:L-entropy}.

Fix $\alpha<h(\beta)$ and choose $\kappa>0$ so small that
$\alpha+7\kappa<h(\beta)$. By \eqref{eq:L-entropy}, for all sufficiently large $m$,
\[
L_m\ge e^{(h(\beta)-\kappa)m}.
\]
Since $n_m/m\to\beta$, there is $A>0$ such that $n_m-m+1\le Am$ for all sufficiently large $m$.
Using $\binom pk\le(ep/k)^k$, we obtain, for $1\le k\le\delta_0m$ and $x:=k/m$,
\[
\frac1m\log\left[\binom{m-1}{k}\binom{n_m-m+1}{k}\right]
\le x\log\left(\frac{Ae^2}{x^2}\right).
\]
The right-hand side tends to zero as $x\downarrow0$. \rev{Choose $\delta_0>0$ sufficiently small
that $\delta_0<\frac12\min\{1,\beta-1\}$ and}
\[
\rev{x\log\left(\frac{Ae^2}{x^2}\right)\le\frac\kappa2
\qquad(0<x\le\delta_0).}
\]
Then every summand in \eqref{eq:overlap-entropy} is at most $e^{\kappa m/2}$, and the sum contains
at most $m$ terms. Hence, for all sufficiently large $m$,
\[
\sum_{1\le k\le\lfloor\delta_0m\rfloor}
\binom{m-1}{k}\binom{n_m-m+1}{k}
\le me^{\kappa m/2}\le e^{\kappa m}.
\]
Dividing by the lower bound for $L_m$ gives
\[
\frac1{L_m}
\sum_{1\le k\le\lfloor\delta_0m\rfloor}
\binom{m-1}{k}\binom{n_m-m+1}{k}
\le e^{-(h(\beta)-2\kappa)m}.
\]
Since $h(\beta)-2\kappa>\alpha+4\kappa$, this proves \eqref{eq:overlap-entropy}.
\end{proof}

\begin{lemma}[Row-norm tail]\label{lem:row-norm}
For every $A>0$ there is $R=R(A,\xi)$ such that, for
$X=(\xi_1,\dots,\xi_m)$,
\begin{equation}\label{eq:row-norm-tail}
\Pp\{\|X\|_2>R\sqrt m\}\le e^{-Am}
\end{equation}
for all sufficiently large $m$.
\end{lemma}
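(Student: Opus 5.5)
The plan is to bound the Laplace transform of $\|X\|_2^2$ and then apply a Chernoff bound. This uses only that $\xi$ is subgaussian under the normalization $\Ee\xi=0$, $\operatorname{Var}(\xi)=1$; the density hypothesis plays no role.

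First, let $\kappa:=\|\xi\|_{\psi_2}<\infty$. The Orlicz convention gives $\Ee\exp(\xi^2/\kappa^2)\le2$, so with $\lambda:=\kappa^{-2}$ we have $\Ee e^{\lambda\xi^2}\le2$.

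Second, since the coordinates $\xi_1,\dots,\xi_m$ are independent,
\[
\Ee\exp\bigl(\lambda\|X\|_2^2\bigr)=\prod_{j=1}^m\Ee e^{\lambda\xi_j^2}\le2^m.
\]

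Third, Markov's inequality applied to $\exp(\lambda\|X\|_2^2)$ gives
\[
\Pp\{\|X\|_2>R\sqrt m\}\le2^me^{-\lambda R^2m}=\exp\bigl(-(\lambda R^2-\log2)m\bigr).
\]
We now choose $R=R(A,\xi)$ with $\lambda R^2\ge A+\log2$, for instance
\[
R:=\kappa\sqrt{A+\log 2}.
\]
With this choice the right-hand side is at most $e^{-Am}$. This gives \eqref{eq:row-norm-tail}, and in fact for every $m\ge1$, not only for sufficiently large $m$.

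There is no real obstacle; the only point to check is that $\kappa$ is finite and positive. Finiteness is the subgaussian hypothesis, and positivity holds because $\operatorname{Var}(\xi)=1$, so $\xi$ is not almost surely zero. As a cross-check, one could instead use that $\xi_j^2-1$ are independent centered subexponential variables and apply Bernstein's inequality, but the moment-generating-function argument above is shorter and self-contained.
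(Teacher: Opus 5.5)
Your proposal is correct and is essentially the paper's proof: a Chernoff bound on $\exp(\lambda\|X\|_2^2)$ with independence factoring the moment generating function, and $R$ chosen so that $\lambda R^2$ exceeds $A$ plus the log of the one-coordinate moment. The only difference is that you take $\lambda=\|\xi\|_{\psi_2}^{-2}$ to get $\Ee e^{\lambda\xi^2}\le 2$ and an explicit $R=\kappa\sqrt{A+\log 2}$, valid for every $m\ge1$, where the paper uses an unspecified $\lambda$ with $D=\Ee e^{\lambda\xi^2}<\infty$.
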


\begin{proof}
Since $\xi$ is subgaussian, there is $\lambda>0$, depending only on its law, such that
$D:=\Ee e^{\lambda\xi^2}<\infty$. Markov's inequality gives
\[
\begin{aligned}
\Pp\left\{\sum_{j=1}^m\xi_j^2>R^2m\right\}
&\le e^{-\lambda R^2m}\Ee\exp\left(\lambda\sum_{j=1}^m\xi_j^2\right)\\
&=e^{-\lambda R^2m}\prod_{j=1}^m\Ee e^{\lambda\xi_j^2}\\
&=\exp\bigl\{-(\lambda R^2-\log D)m\bigr\}.
\end{aligned}
\]
\rev{Because $\{\|X\|_2>R\sqrt m\}=\{\sum_{j=1}^m\xi_j^2>R^2m\}$, choosing $R$ so
large that $\lambda R^2-\log D\ge A$ proves the claim.}
\end{proof}

\begin{lemma}[Few nearly parallel pairs]\label{lem:few-parallel}
Suppose $n_m/m\to\beta>1$. With the notation above, fix $\alpha\in(0,h(\beta))$. There are constants
$\delta_0,a,\kappa,c_*>0$ such that, with probability at least $1-e^{-c_*m}$ for all sufficiently
large $m$, the number of ordered distinct pairs $(S,T)\in\cS^2$ satisfying at least one of
\begin{equation}\label{eq:exceptional-pairs}
|T\setminus S|\le\delta_0m,
\qquad\text{or}\qquad
\dpr(u_S,u_T)<\frac{a}{\sqrt m},
\end{equation}
is at most
\[
2L_m^2e^{-(\alpha+4\kappa)m}.
\]
\end{lemma}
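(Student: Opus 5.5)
The plan is to split the exceptional pairs in \eqref{eq:exceptional-pairs} into two classes. The first class consists of the \emph{high-overlap} pairs, those with $|T\setminus S|\le\delta_0m$; these are counted deterministically by entropy. The second class consists of the \emph{low-overlap but nearly parallel} pairs, those with $|T\setminus S|>\delta_0m$ and $\dpr(u_S,u_T)<a/\sqrt m$; for these we bound the expected number and apply Markov's inequality. Each class will contribute at most $L_m^2e^{-(\alpha+4\kappa)m}$, which gives the factor $2$.

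\emph{Order of constants.} First take $\delta_0,\kappa$ from \lemref{lem:entropy} for the given $\alpha$ and $\beta$. Next fix $R=R(2,\xi)$ from \lemref{lem:row-norm}, so that each row has norm exceeding $R\sqrt m$ with probability at most $e^{-2m}$. Finally choose $a>0$ so small that
\[
(2\sqrt2KaR)^{\delta_0}\le e^{-(\alpha+5\kappa)}.
\]

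\emph{High-overlap pairs.} For fixed $S$, the number of $T\in\cS$ with $|T\setminus S|=k$ is $\binom{m-1}{k}\binom{n_m-m+1}{k}$. Distinct pairs have $k\ge1$, so summing over $1\le k\le\lfloor\delta_0m\rfloor$ and over $S$ gives at most
\[
L_m\sum_{1\le k\le\lfloor\delta_0m\rfloor}\binom{m-1}{k}\binom{n_m-m+1}{k}\le L_m^2e^{-(\alpha+4\kappa)m}
\]
by \eqref{eq:overlap-entropy}. This bound is deterministic.

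\emph{Low-overlap pairs.} Let $\cE$ be the event that every row satisfies $\|R_i\|_2\le R\sqrt m$. A union bound gives $\Pp(\cE^c)\le n_me^{-2m}$. Fix an ordered pair $(S,T)$ with $k:=|T\setminus S|>\delta_0m$.

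Suppose $\dpr(u_S,u_T)<a/\sqrt m$ holds on $\cE$. Pick the sign $\epsilon\in\{\pm1\}$ realizing the projective distance. Every fresh row $X=R_i$ with $i\in T\setminus S$ is orthogonal to $u_T$, so
\[
|\langle X,u_S\rangle|=|\langle X,u_S-\epsilon u_T\rangle|\le\|X\|_2\frac a{\sqrt m}\le aR.
\]

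Now condition on the rows indexed by $S$. This fixes $u_S$, which is a measurable function of those rows, while the $k$ fresh rows remain independent of them and of each other. By \eqref{eq:one-dimensional-density}, each fresh row satisfies $\Pp\{|\langle X,u_S\rangle|\le aR\}\le2\sqrt2KaR$. Independence of the fresh rows then gives
\[
\Pp\bigl(\{\dpr(u_S,u_T)<a/\sqrt m\}\cap\cE\bigr)\le(2\sqrt2KaR)^{k}\le e^{-(\alpha+5\kappa)m}.
\]
Summing over at most $L_m^2$ pairs, the expected number of low-overlap close pairs on $\cE$ is at most $L_m^2e^{-(\alpha+5\kappa)m}$. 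Markov's inequality with threshold $L_m^2e^{-(\alpha+4\kappa)m}$ therefore gives failure probability at most $e^{-\kappa m}$.

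Combining the two classes, the total failure probability is at most $n_me^{-2m}+e^{-\kappa m}\le e^{-c_*m}$ for some $c_*>0$ and large $m$.

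The main point of care is the conditioning step. The bound must use only that $u_S$ is determined by the rows indexed by $S$, while the relation $X\perp u_T$ for $X$ in $T\setminus S$ is used only pathwise, to transfer smallness from $u_T$ to $u_S$; no independence between $u_T$ and the fresh rows is needed. The large-norm rows must be excluded globally through $\cE$ rather than row by row, so that the per-row small-ball bound stays a clean product. Finally, $a$ has to be chosen after $R$, and $R$ after $\delta_0$ and $\kappa$.
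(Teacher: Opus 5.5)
Your proof is correct and follows the paper's argument. It uses the same split into high-overlap pairs, counted deterministically by \lemref{lem:entropy}, and low-overlap nearly parallel pairs, handled by conditioning on the rows in $S$, taking a product of one-dimensional small-ball bounds over the fresh rows, and applying Markov's inequality. The only difference is that you truncate row norms globally, with one union bound over all $n_m$ rows followed by Markov on the count restricted to that event. The paper instead puts the norm event $\mathcal N$ into each per-pair probability, so its $R$ must come from \lemref{lem:row-norm} with an exponent larger than $\alpha+4\kappa$ (it uses $A+2$ with $A>\alpha+8\kappa$). In your version exponent $2$ suffices, so $R$ does not depend on $\alpha$.
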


\begin{proof}
Fix $\alpha<h(\beta)$. By \lemref{lem:entropy}, there are $\delta_0,\kappa>0$ such that
\[
\frac1{L_m}
\sum_{1\le k\le\lfloor\delta_0m\rfloor}
\binom{m-1}{k}\binom{n_m-m+1}{k}
\le e^{-(\alpha+4\kappa)m}
\]
for all sufficiently large $m$.

We first count pairs with large overlap. Fix $S\in\cS$ and let $k:=|T\setminus S|$. Since
$|S|=|T|=m-1$, one also has $|S\setminus T|=k$. To construct $T$, remove $k$ indices from $S$
and add $k$ indices from its complement. Thus the number of such $T$ is exactly
\[
\binom{m-1}{k}\binom{n_m-m+1}{k}.
\]
Summing over $S$ and $1\le k\le\lfloor\delta_0m\rfloor$, the number of ordered distinct pairs with
$|T\setminus S|\le\delta_0m$ is at most
\[
L_m^2e^{-(\alpha+4\kappa)m}.
\]
This bound is deterministic.

It remains to control pairs satisfying $k:=|T\setminus S|>\delta_0m$ and
$\dpr(u_S,u_T)<a/\sqrt m$. Fix such an ordered pair and condition on all rows indexed by $S$.
Then $u:=u_S$ is fixed, while the rows $R_i$, $i\in T\setminus S$, remain independent of $u$ and
of one another. If $\dpr(u_S,u_T)<a/\sqrt m$, choose $v\in\{u_T,-u_T\}$ so that
$\|u-v\|_2<a/\sqrt m$. For every $i\in T\setminus S$, one has $\langle R_i,v\rangle=0$.
Introduce
\[
\mathcal N:=\left\{\max_{i\in T\setminus S}\|R_i\|_2>R\sqrt m\right\}.
\]
On $\mathcal N^c$,
\[
|\langle R_i,u\rangle|
=|\langle R_i,u-v\rangle|
\le\|R_i\|_2\|u-v\|_2
<Ra.
\]
Consequently,
\[
\left\{\dpr(u_S,u_T)<\frac a{\sqrt m}\right\}
\subseteq
\mathcal N\cup\bigcap_{i\in T\setminus S}
\{|\langle R_i,u\rangle|\le Ra\}.
\]

All probabilities in the following estimates are conditional on the rows indexed by $S$; the
bounds are uniform in their realization and therefore remain valid after averaging. Choose
$A>\alpha+8\kappa$. \lemref{lem:row-norm}, applied with exponent $A+2$, gives an
$R>0$ such that
\[
\Pp\{\|R_i\|_2>R\sqrt m\}\le e^{-(A+2)m}.
\]
Since $|T\setminus S|\le n_m=O(m)$,
\[
\Pp(\mathcal N)\le e^{-(A+1)m}
\]
for all sufficiently large $m$.

Conditional on $u$, \lemref{lem:marginal-density} gives
\[
\Pp\{|\langle R_i,u\rangle|\le Ra\mid u\}
\le2\sqrt2KRa.
\]
The fresh rows are conditionally independent, so
\[
\Pp\left\{\bigcap_{i\in T\setminus S}
\{|\langle R_i,u\rangle|\le Ra\}\,\middle|\,u\right\}
\le(2\sqrt2KRa)^k.
\]
Choose $a>0$ so small that
\[
2\sqrt2KRa\le e^{-(A+1)/\delta_0}<1.
\]
Since $k>\delta_0m$,
\[
(2\sqrt2KRa)^k\le e^{-(A+1)m}.
\]
Thus, uniformly over every fixed ordered pair with $|T\setminus S|>\delta_0m$,
\[
\Pp\left\{\dpr(u_S,u_T)<\frac a{\sqrt m}\right\}
\le2e^{-(A+1)m}\le2e^{-Am}.
\]

\rev{Let $F_m$ denote the number of ordered pairs with $|T\setminus S|>\delta_0m$ and
$\dpr(u_S,u_T)<a/\sqrt m$. By linearity of expectation,}
\[
\rev{\Ee F_m\le2L_m^2e^{-Am}.}
\]
\rev{Markov's inequality therefore gives}
\[
\rev{\Pp\{F_m>L_m^2e^{-(\alpha+4\kappa)m}\}
\le2e^{-(A-\alpha-4\kappa)m}.}
\]
\rev{Set $c_*:=(A-\alpha-4\kappa)/2>0$. For all sufficiently large $m$, the last expression is at
most $e^{-c_*m}$. Outside this exceptional event, the low-overlap nearly parallel pairs contribute
at most $L_m^2e^{-(\alpha+4\kappa)m}$ pairs. Adding the deterministic high-overlap bound proves
the lemma.}
\end{proof}

\section{Lower-tail estimate}\label{sec:lower-tail}

\begin{proposition}[No submatrix is much smaller]\label{prop:lower-tail}
Under the assumptions of \thmref{thm:main}, for every $\varepsilon>0$,
\begin{equation}\label{eq:lower-tail-event}
\Pp\{M_m(A_m)\le e^{-(h(\gamma)+\varepsilon)m}\}\longrightarrow0.
\end{equation}
\rev{Moreover,} there are constants $C,c>0$, depending on $\gamma$, $\varepsilon$, and the law of
$\xi$, such that the probability in \eqref{eq:lower-tail-event} is at most $Ce^{-cm}$ for all
sufficiently large $m$.
\end{proposition}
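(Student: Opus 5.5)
The plan is a union bound over all $\binom{N_m}{m}=e^{h(\gamma)m+o(m)}$ square row submatrices, combined with a single-matrix small-ball estimate for the least singular value. The estimate needs to be only polynomially lossy and must hold uniformly for square matrices with bounded-density i.i.d.\ entries. Set $t:=e^{-(h(\gamma)+\varepsilon)m}$. It suffices to show, for a fixed $m\times m$ matrix $B$ with i.i.d.\ entries distributed as $\xi$, that
\[
\Pp\{\sigma_{\min}(B)\le t\}\le C m^{C} t + e^{-c'm}
\]
with $c'$ large, or better, a bound with no additive term at all. Multiplying by $\binom{N_m}{m}\le e^{(h(\gamma)+\varepsilon/2)m}$ then gives the claimed bound $Ce^{-cm}$. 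Since the additive term would be multiplied by an exponentially large count, the single-matrix bound must be essentially purely linear in $t$ on exponentially small scales.

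To obtain such a bound I would use the negative second moment identity, or equivalently the distance-to-hyperplane reduction, which works cleanly with bounded densities. Let $B_1,\dots,B_m$ be the rows of $B$ and $H_k$ the span of the other rows. Then $\sigma_{\min}(B)\ge m^{-1/2}\min_k\dist(B_k,H_k)$; this follows from $\|B^{-1}\|_{HS}^2=\sum_k\dist(B_k,H_k)^{-2}$, applied to columns of $B^{-1}$ via the rows of $B$, or to the transpose. Hence
\[
\Pp\{\sigma_{\min}(B)\le t\}\le\sum_{k=1}^m\Pp\{\dist(B_k,H_k)\le\sqrt m\,t\}.
\]
Conditioning on the rows other than $B_k$ fixes a unit normal $u$ of $H_k$, which is well defined almost surely by absolute continuity, and $\dist(B_k,H_k)=|\langle B_k,u\rangle|$. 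By the one-dimensional part of \lemref{lem:marginal-density}, $\langle B_k,u\rangle$ has density at most $\sqrt2K$ uniformly in $u$. Therefore
\[
\Pp\{\dist(B_k,H_k)\le\sqrt m\,t\}\le 2\sqrt2K\sqrt m\,t,
\]
and summing over $k$ gives $\Pp\{\sigma_{\min}(B)\le t\}\le 2\sqrt2K m^{3/2}t$. This bound is linear in $t$ with no additive error, valid for every $t>0$, and uses neither subgaussianity nor any invertibility theory.

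Combining, and using $N_m/m\to\gamma$ together with \lemref{lem:entropy} (applied with $m$ in place of $m-1$, or directly through Stirling) to get $\binom{N_m}{m}\le e^{(h(\gamma)+\varepsilon/2)m}$ for large $m$, we obtain
\[
\Pp\{M_m(A_m)\le t\}\le \binom{N_m}{m}\,2\sqrt2K m^{3/2}e^{-(h(\gamma)+\varepsilon)m}\le 2\sqrt2Km^{3/2}e^{-\varepsilon m/2}\le Ce^{-\varepsilon m/4}
\]
for all sufficiently large $m$, which yields both statements with $c=\varepsilon/4$. The only step needing care is the deterministic inequality $\sigma_{\min}(B)\ge m^{-1/2}\min_k\dist(B_k,H_k)$. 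For a unit vector $x$ with $\|B^{\mathsf T}x\|$ small, some coordinate satisfies $|x_k|\ge m^{-1/2}$, and then $\|B^{\mathsf T}x\|\ge|x_k|\dist(B_k,H_k)$. Since $\sigma_{\min}(B^{\mathsf T})=\sigma_{\min}(B)$ for square matrices, this gives the inequality. I expect no genuine obstacle here; the argument is the Gaussian lower-tail argument with rotational invariance replaced by the uniform projection-density bound.
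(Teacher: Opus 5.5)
Your proposal is correct and follows the paper's proof essentially step for step. Both use the distance-to-hyperplane reduction, the uniform one-dimensional density bound $\sqrt2K$ giving $\Pp\{\sigma_{\min}(B)\le t\}\le Cm^{3/2}t$, and a union bound over the $\binom{N_m}{m}=e^{h(\gamma)m+o(m)}$ submatrices. The only cosmetic difference is that you derive $\sigma_{\min}(B)\ge m^{-1/2}\min_k\dist(B_k,H_k)$ from a coordinate with $|x_k|\ge m^{-1/2}$, where the paper uses the Frobenius-norm identity; the two are equivalent.
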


\begin{proof}
Let $B$ be \rev{an $m\times m$ matrix with i.i.d. entries distributed as $\xi$}, with rows
$R_1,\dots,R_m$. By absolute continuity, $B$ is invertible almost surely; we work on this
full-probability event. Put
\[
H_i:=\operatorname{span}\{R_j:j\ne i\},
\qquad
d_i:=\dist(R_i,H_i).
\]
The negative second-moment identity gives
\begin{equation}\label{eq:negative-second-moment}
\sum_{j=1}^m\sigma_j(B)^{-2}=\sum_{i=1}^md_i^{-2}.
\end{equation}
Indeed, the $i$th column of $B^{-1}$ has norm $d_i^{-1}$, and
\eqref{eq:negative-second-moment} follows by taking the squared Frobenius norm of $B^{-1}$.

If $\sigma_{\min}(B)\le t$, then \eqref{eq:negative-second-moment} implies
$d_i\le\sqrt m\,t$ for some $i$. Conditional on all rows except $R_i$, let $u_i$ be a unit normal to
$H_i$. Then $d_i=|\langle R_i,u_i\rangle|$, and \eqref{eq:one-dimensional-density} gives
\[
\Pp\{d_i\le\sqrt m\,t\mid(R_j)_{j\ne i}\}
\le2\sqrt2K\sqrt m\,t.
\]
A union bound over $i$ yields
\begin{equation}\label{eq:one-matrix-lower-tail}
\Pp\{\sigma_{\min}(B)\le t\}\le CKm^{3/2}t.
\end{equation}

There are $\binom{N_m}{m}$ square row submatrices. Taking
$t=e^{-(h(\gamma)+\varepsilon)m}$ and applying another union bound gives
\[
\Pp\{M_m(A_m)\le t\}
\le CKm^{3/2}\binom{N_m}{m}t
=\exp\{-\varepsilon m+o(m)\}.
\]
Since $m^{-1}\log\binom{N_m}{m}\to h(\gamma)$, for all sufficiently large $m$,
\[
\binom{N_m}{m}\le e^{(h(\gamma)+\varepsilon/2)m}.
\]
Hence the preceding probability is at most
$CKm^{3/2}e^{-\varepsilon m/2}\le Ce^{-cm}$, proving the quantitative estimate without any rate
assumption on the convergence $N_m/m\to\gamma$.
\end{proof}

\section{Upper-tail estimate}\label{sec:upper-tail}

\begin{proposition}[A submatrix reaches the predicted scale]\label{prop:upper-tail}
Under the assumptions of \thmref{thm:main}, for every $\varepsilon>0$,
\begin{equation}\label{eq:upper-tail-event}
\Pp\{M_m(A_m)>e^{-(h(\gamma)-\varepsilon)m}\}\longrightarrow0.
\end{equation}
\rev{Moreover,} if $0<\varepsilon<h(\gamma)$, then there are constants $C,c>0$, depending on
$\gamma$, $\varepsilon$, and the law of $\xi$, such that the probability in
\eqref{eq:upper-tail-event} is at most $Ce^{-c\sqrt m}$ for all sufficiently large $m$.
\end{proposition}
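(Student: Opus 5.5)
The plan is to exhibit, with high probability, one row set of the form $S\cup\{i\}$ whose least singular value is at most $t:=e^{-(h(\gamma)-\varepsilon)m}$. We may assume $0<\varepsilon<h(\gamma)$, since larger $\varepsilon$ only makes the event smaller. Fix a small $\theta>0$ and split the $N_m$ rows into two groups:
\begin{itemize}[leftmargin=2em]
\item \emph{hyperplane rows} $R_1,\dots,R_n$, with $n=N_m-\lfloor\theta m\rfloor$, so that $n/m\to\beta:=\gamma-\theta>1$;
\item $p=\lfloor\theta m\rfloor$ independent \emph{probe rows} $P_1,\dots,P_p$.
\end{itemize}
The basic deterministic observation is the following. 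Let $S\subset[n]$ with $|S|=m-1$, and let $P$ be a probe. The square matrix $B$ with rows indexed by $S$ together with $P$ satisfies $Bu_S=\langle P,u_S\rangle e_{\mathrm{last}}$. Hence
\[
\sigma_{\min}(B)\le|\langle P,u_S\rangle|.
\]
It therefore suffices to show that some probe satisfies $|\langle P_j,u_S\rangle|\le t$ for some $S\in\cG$. Since $h$ is continuous, choose $\theta$ so small that $h(\beta)>h(\gamma)-\varepsilon/4$. Then take $\alpha:=h(\gamma)-\varepsilon/2\in(0,h(\beta))$ in \lemref{lem:few-parallel}, which yields constants $\delta_0,a,\kappa$.

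Next, condition on the hyperplane rows and work on the good event $\mathcal E$ where two things hold: $|\cG|\ge L/2$ (\lemref{lem:many-good}), and the exceptional pairs number at most $2L^2e^{-(\alpha+4\kappa)m}$ (\lemref{lem:few-parallel}). Then $\Pp(\mathcal E^c)\le e^{-cm}$. For a single probe $P$, set
\[
Z:=\sum_{S\in\cG}\1\{|\langle P,u_S\rangle|\le t\}.
\]
The first moment comes from \lemref{lem:local-lower}, applicable because $\|u_S\|_\infty\le\eta$ and $t\le 1$:
\[
\Ee Z\ge |\cG|\cdot 2c_0t\ge c_0Lt.
\]
For the second moment $\Ee Z^2=\sum_{S,T\in\cG}\Pp\{\cdot,\cdot\}$, split the pairs into three kinds.
\begin{itemize}[leftmargin=2em]
\item \emph{Diagonal pairs} contribute at most $\Ee Z\le 2\sqrt2KLt$, using \eqref{eq:one-dimensional-density}.
\item \emph{Exceptional pairs}: bound each term by the one-dimensional probability $2\sqrt2Kt$, giving a total of at most $CL^2e^{-(\alpha+4\kappa)m}t$.
\item \emph{Remaining pairs} have $\dpr(u_S,u_T)\ge a/\sqrt m$. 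Then $1-\rho^2\ge c a^2/m$, because $1-|\rho|=\dpr^2/2$. So \eqref{eq:two-dimensional-density} bounds each term by $C\sqrt m\,t^2/a$, for a total of at most $C\sqrt m L^2t^2$.
\end{itemize}
Paley--Zygmund then gives
\[
\Pp\{Z>0\}\ge\frac{(\Ee Z)^2}{\Ee Z^2}\ge \frac{c\,L^2t^2}{Lt+L^2e^{-(\alpha+4\kappa)m}t+\sqrt m L^2t^2}.
\]

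Now check that the last denominator term dominates. By \eqref{eq:L-entropy}, $Lt\ge e^{(h(\beta)-h(\gamma)+\varepsilon-o(1))m}\ge e^{\varepsilon m/2}\to\infty$, so $Lt\ll\sqrt m L^2t^2$. Also $e^{-(\alpha+4\kappa)m}=e^{-(h(\gamma)-\varepsilon/2+4\kappa)m}\le t$, so the exceptional term is at most $L^2t^2$. Hence, on $\mathcal E$,
\[
\Pp\{Z>0\mid \text{hyperplane rows}\}\ge c/\sqrt m .
\]
The probes are i.i.d. and independent of the hyperplane rows. So, conditionally on $\mathcal E$, the probability that every probe fails is at most
\[
(1-c/\sqrt m)^{\lfloor\theta m\rfloor}\le e^{-c\theta\sqrt m}.
\]
Adding $\Pp(\mathcal E^c)\le e^{-cm}$ gives $\Pp\{M_m(A_m)>t\}\le Ce^{-c\sqrt m}$, and in particular \eqref{eq:upper-tail-event}.

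The main obstacle is the second-moment bookkeeping. All probabilities in $\Ee Z^2$ must be conditional on the hyperplane rows, so the normals $u_S,u_T$ are deterministic. The random sets $\cG$ and the exceptional pairs must be controlled only through the event $\mathcal E$; no union bound over hyperplanes is allowed. The parameters must also be ordered correctly: $\theta$ first, then $\alpha$, then $\delta_0,\kappa,a$ from \lemref{lem:few-parallel}. This ordering is what makes the exceptional pairs negligible against $L^2t^2$ while keeping $Lt\to\infty$.
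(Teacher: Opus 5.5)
Your proposal is correct and follows essentially the same route as the paper. You reserve a linear number of probe rows and bound the per-probe hit probability below by order $m^{-1/2}$ via a conditional Paley--Zygmund argument over the good hyperplanes, built from Lemmas~\ref{lem:local-lower}, \ref{lem:many-good}, \ref{lem:few-parallel} and the two-dimensional density bound, then amplify to $1-e^{-c\sqrt m}$. The only difference is cosmetic: you apply Lemma~\ref{lem:few-parallel} with $\alpha=h(\gamma)-\varepsilon/2$ while keeping $t=e^{-(h(\gamma)-\varepsilon)m}$, whereas the paper uses $\alpha=h(\gamma)-\varepsilon$ for both, which only changes the relative size of the exceptional-pair term from $e^{-4\kappa m}$ to $e^{-(\varepsilon/2+4\kappa)m}$.
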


\begin{proof}
\rev{It suffices to prove the quantitative statement for $0<\varepsilon<h(\gamma)$: if
$\varepsilon\ge h(\gamma)$, the event in \eqref{eq:upper-tail-event} is contained in the
corresponding event for any fixed $\varepsilon_0\in(0,h(\gamma))$.}
Set
\[
\alpha:=h(\gamma)-\varepsilon>0.
\]
Since $h$ is continuous and strictly increasing on $(1,\infty)$, choose
$0<\delta<\gamma-1$ such that $h(\gamma-\delta)>\alpha$. Reserve
$r_m:=\lfloor\delta m\rfloor$ rows of $A_m$ as probes and call the remaining
$n_m:=N_m-r_m$ rows the base rows. Then
\[
\frac{n_m}{m}\longrightarrow\beta:=\gamma-\delta>1,
\qquad
L_m:=\binom{n_m}{m-1}=\exp\{h(\beta)m+o(m)\}.
\]
For every $(m-1)$-subset $S$ of the base rows, let $u_S$ be a unit normal to their span. Let
$\cG$ be the good family from \lemref{lem:many-good}. Apply \lemref{lem:few-parallel}
with the present $\alpha<h(\beta)$, and let $\cE$ be the collection of ordered distinct pairs
$(S,T)$ satisfying at least one of the two conditions in \eqref{eq:exceptional-pairs}. By
\lemref{lem:many-good} and \lemref{lem:few-parallel}, with probability at least
$1-Ce^{-cm}$ over the base rows, for all sufficiently large $m$,
\begin{equation}\label{eq:good-base}
|\cG|\ge\frac{L_m}{2},
\qquad
|\cE|\le2L_m^2e^{-(\alpha+4\kappa)m}.
\end{equation}

Fix a base realization satisfying \eqref{eq:good-base}. Put
$t_m:=e^{-\alpha m}$, and let $X$ be one independent probe row. Define
\[
Z_X:=\sum_{S\in\cG}\1_{\{|\langle X,u_S\rangle|\le t_m\}}.
\]
By \lemref{lem:local-lower},
\begin{equation}\label{eq:mu-lower}
\mu:=\Ee[Z_X\mid\text{base}]\ge2c_0|\cG|t_m.
\end{equation}
Since $h(\beta)>\alpha$,
\begin{equation}\label{eq:Gt-diverges}
|\cG|t_m=\exp\{(h(\beta)-\alpha)m+o(m)\}\longrightarrow\infty.
\end{equation}

We estimate the conditional second moment. By \eqref{eq:one-dimensional-density}, every diagonal
term is at most $Ct_m$. If $(S,T)\notin\cE$, then
\[
\dpr(u_S,u_T)\ge\frac a{\sqrt m}.
\]
Let $\rho:=|\langle u_S,u_T\rangle|$. The definition of projective distance gives the exact identity
\[
1-\rho^2=(1+\rho)(1-\rho)
=\frac{1+\rho}{2}\dpr(u_S,u_T)^2.
\]
Consequently,
\begin{equation}\label{eq:rho-separation}
1-\rho^2\ge\frac12\dpr(u_S,u_T)^2\ge\frac{a^2}{2m}.
\end{equation}
By the corrected two-dimensional density estimate \eqref{eq:two-dimensional-density}, followed by
integration over $[-t_m,t_m]^2$,
\begin{equation}\label{eq:pair-small-ball}
\Pp\bigl\{|\langle X,u_S\rangle|\le t_m,\ |\langle X,u_T\rangle|\le t_m
\mid\text{base}\bigr\}
\le C\sqrt m\,t_m^2.
\end{equation}
For an exceptional pair, the one-dimensional estimate gives the crude bound $Ct_m$. Therefore,
\begin{equation}\label{eq:second-moment}
\Ee[Z_X^2\mid\text{base}]
\le C|\cG|t_m+C\sqrt m\,|\cG|^2t_m^2+C|\cE|t_m.
\end{equation}

\begin{revision}
Dividing \eqref{eq:second-moment} by the square of \eqref{eq:mu-lower} and using
\eqref{eq:good-base}--\eqref{eq:Gt-diverges}, we obtain
\[
\begin{aligned}
\frac{\Ee[Z_X^2\mid\text{base}]}{\mu^2}
&\le \frac{C}{|\cG|t_m}+C\sqrt m+
C\frac{L_m^2e^{-(\alpha+4\kappa)m}}{|\cG|^2t_m}\\
&\le \frac{C}{|\cG|t_m}+C\sqrt m+Ce^{-4\kappa m}\\
&\le C'\sqrt m+o(1).
\end{aligned}
\]
The Paley--Zygmund inequality at level zero (equivalently, Cauchy--Schwarz applied to
$Z_X\1_{\{Z_X>0\}}$) gives, uniformly over every base realization satisfying
\eqref{eq:good-base},
\begin{equation}\label{eq:one-probe-hit}
\Pp\{Z_X>0\mid\text{base}\}
\ge\frac{\mu^2}{\Ee[Z_X^2\mid\text{base}]}
\ge\frac{c_1}{\sqrt m}.
\end{equation}
\end{revision}

Conditional on the base rows, the $r_m$ probe rows are independent and each satisfies
\eqref{eq:one-probe-hit}. Hence the probability that none of them hits a good base hyperplane is at
most
\begin{equation}\label{eq:amplification}
\left(1-\frac{c_1}{\sqrt m}\right)^{r_m}
\le e^{-c_1r_m/\sqrt m}
\le e^{-c_2\sqrt m}.
\end{equation}
If a probe row $X$ hits $S$, form the square matrix $B$ from the rows indexed by $S$ together with
$X$. Since the rows in $S$ annihilate $u_S$,
\[
\sigma_{\min}(B)\le\|Bu_S\|_2=|\langle X,u_S\rangle|\le t_m.
\]
Thus $M_m(A_m)\le t_m$. Combining \eqref{eq:good-base} and \eqref{eq:amplification} gives
\[
\Pp\{M_m(A_m)>t_m\}
\le Ce^{-cm}+e^{-c_2\sqrt m}
\le C'e^{-c'\sqrt m},
\]
which proves \eqref{eq:upper-tail-event} and the stated quantitative bound.
\end{proof}

\section{Conclusion and applications}\label{sec:conclusion}

\begin{proof}[Proof of \thmref{thm:main}]
\rev{By the normalization reduction in \remref{rem:normalization}, it is enough to prove the
result under the standing unit-variance normalization.}
Fix $\varepsilon>0$. \propref{prop:lower-tail} gives
\[
\Pp\left\{\frac1m\log M_m(A_m)<-h(\gamma)-\varepsilon\right\}\longrightarrow0,
\]
and \propref{prop:upper-tail} gives
\[
\Pp\left\{\frac1m\log M_m(A_m)>-h(\gamma)+\varepsilon\right\}\longrightarrow0.
\]
Because the entry law is absolutely continuous, every square row submatrix is invertible almost
surely, and there are finitely many such submatrices for each $m$. Hence $M_m(A_m)>0$ almost
surely and the logarithm is well defined. The two tail estimates prove
\eqref{eq:main-convergence}. For $0<\varepsilon<h(\gamma)$, their quantitative forms yield
\eqref{eq:quantitative-main}.
\end{proof}

\begin{proof}[Proof of \corref{cor:phase-prob}]
\begin{revision}
Let $\sigma^2=\operatorname{Var}(\xi)$ and $\widehat A_m=A_m/\sigma$. Homogeneity gives
\[
\omega(A_m)=\sigma\omega(\widehat A_m),
\qquad
R_m(A_m)=\sigma R_m(\widehat A_m).
\]
Thus the logarithmic limit changes only by $(\log\sigma)/m$, while each comparison of
$\omega(A_m)$ with $R_m(A_m)b^m$ is exactly invariant under the common factor $\sigma$.
It therefore suffices to prove the corollary in the unit-variance normalization.
\end{revision}
Absolute continuity implies that $A_m$ is full spark almost surely. By
\propref{prop:full-spark-reduction},
\[
\omega(A_m)=M_m(A_m).
\]
Since $h(2)=\log4$, \thmref{thm:main} gives the first assertion.

It remains to compare $\omega(A_m)$ with the largest row norm. For every fixed $\eta>0$,
subgaussianity and a union bound give
\[
\Pp\{R_m>e^{\eta m}\}\longrightarrow0.
\]
On the other hand, $R_m\ge|(A_m)_{11}|$ and bounded density gives
\[
\Pp\{R_m<e^{-\eta m}\}
\le\Pp\{|\xi|<e^{-\eta m}\}
\le2Ke^{-\eta m}.
\]
Thus $m^{-1}\log R_m\to0$ in probability.

If $b>1/4$, choose $\varepsilon,\eta>0$ with
$\varepsilon+\eta<\log(4b)$. With probability tending to one,
\[
\omega(A_m)\le e^{(-\log4+\varepsilon)m}
\le e^{-\eta m}b^m
\le R_mb^m.
\]
If $0<b<1/4$, choose $\varepsilon,\eta>0$ with
$\varepsilon+\eta<-\log(4b)$. With probability tending to one,
\[
\omega(A_m)\ge e^{(-\log4-\varepsilon)m},
\qquad
R_m\le e^{\eta m}.
\]
For every fixed $C>0$,
\[
Ce^{(\eta+\log b)m}<e^{(-\log4-\varepsilon)m}
\]
for all sufficiently large $m$, proving the remaining claim.
\end{proof}

\begin{revision}
\begin{proof}[Proof of \corref{cor:almost-sure}]
Let $\sigma^2=\operatorname{Var}(\xi)$ and put
$\widehat\xi_{ij}:=\xi_{ij}/\sigma$. If $\widehat{\mathcal M}_m(\gamma)$,
$\widehat\omega_m$, and $\widehat R_m$ denote the quantities formed from the normalized array,
then for every $m$ and $\gamma>1$,
\[
\mathcal M_m(\gamma)=\sigma\widehat{\mathcal M}_m(\gamma),
\qquad
\omega(A_m(2))=\sigma\widehat\omega_m,
\qquad
R_m=\sigma\widehat R_m.
\]
Hence the uniform logarithmic expression changes by the deterministic term
$(\log\sigma)/m$, and the two phase-retrieval quantities acquire the same common factor.
It is therefore enough to prove the assertions for the normalized array.
By absolute continuity and countability, with probability one every finite square row submatrix of
the infinite array is nonsingular. We work on this event whenever logarithms are taken. For
$q\in\mathbb Q\cap(1,\infty)$, put
\[
f_m(q):=\frac1m\log\mathcal M_m(q).
\]
The sequence $N_m(q)/m$ converges to $q$. Hence, for every rational
$\varepsilon\in(0,h(q))$, \thmref{thm:main} gives
\[
\sum_{m=1}^\infty
\Pp\{|f_m(q)+h(q)|>\varepsilon\}<\infty,
\]
because $\sum_me^{-c\sqrt m}<\infty$. The first Borel--Cantelli lemma, followed by a countable
intersection over rational pairs $(q,\varepsilon)$, produces a full-probability event on which the
corresponding deviation events occur only finitely often. For any $\delta>0$, one may choose a
rational $\varepsilon\in(0,\min\{\delta,h(q)\})$; hence this event satisfies
\begin{equation}\label{eq:rational-as-convergence}
f_m(q)\longrightarrow-h(q)
\qquad\text{for every }q\in\mathbb Q\cap(1,\infty).
\end{equation}
No independence across dimensions is used in this step.

Fix an outcome in this event and a compact interval $K=[\gamma_0,\gamma_1]\subset(1,\infty)$.
For each $m$, the function
\[
\gamma\longmapsto f_m(\gamma):=\frac1m\log\mathcal M_m(\gamma)
\]
is nonincreasing, because increasing $\gamma$ only adds rows and hence enlarges the family over
which the minimum is taken. Let $\varepsilon>0$. By continuity of $h$, choose rational points
\[
1<q_0<q_1<\cdots<q_L,
\qquad
q_0<\gamma_0<\gamma_1<q_L,
\]
such that every $\gamma\in K$ lies in some $[q_j,q_{j+1}]$ and
\[
h(q_{j+1})-h(q_j)<\frac\varepsilon3
\qquad(0\le j<L).
\]
By \eqref{eq:rational-as-convergence}, for all sufficiently large $m$,
\[
|f_m(q_j)+h(q_j)|<\frac\varepsilon3
\qquad(0\le j\le L).
\]
If $q_j\le\gamma\le q_{j+1}$, monotonicity gives
$f_m(q_{j+1})\le f_m(\gamma)\le f_m(q_j)$. Therefore
\[
-\frac{2\varepsilon}{3}
<f_m(\gamma)+h(\gamma)
<\frac{2\varepsilon}{3}.
\]
This proves the asserted uniform convergence on $K$. The preceding argument is deterministic on
the single full-probability event furnished by \eqref{eq:rational-as-convergence}; hence it applies
to every compact interval in $(1,\infty)$, and therefore to every compact subset of
$(1,\infty)$.

At $\gamma=2$, one has $N_m(2)=2m-1$. Absolute continuity and a countable intersection imply
that all $A_m(2)$ are full spark almost surely. Thus
$\omega(A_m(2))=\mathcal M_m(2)$ for every $m$ on one full-probability event, and the uniform law
gives $\omega(A_m(2))^{1/m}\to1/4$ almost surely.

It remains to control $R_m$. Fix $\eta>0$, choose exponent $3$ in
\lemref{lem:row-norm}, and let $R$ be the resulting constant. For all sufficiently large $m$,
$e^{\eta m}>R\sqrt m$, so a union bound over the $2m-1$ rows gives
\[
\Pp\{R_m>e^{\eta m}\}\le(2m-1)e^{-3m}.
\]
Also, $R_m\ge|\xi_{11}|$, and the bounded-density assumption gives
\[
\Pp\{R_m<e^{-\eta m}\}\le\Pp\{|\xi_{11}|<e^{-\eta m}\}
\le2Ke^{-\eta m}.
\]
Both series are summable. Borel--Cantelli and a countable intersection over rational $\eta>0$
yield $R_m^{1/m}\to1$ almost surely. The two eventual inequalities follow by comparing the
limits $\omega(A_m(2))^{1/m}\to1/4$ and $R_m^{1/m}\to1$, as in the proof of
\corref{cor:phase-prob}.
\end{proof}
\end{revision}

\section{Further questions}

The argument replaces uniform control of exponentially many random hyperplanes by an averaged
geometric estimate and independent-probe amplification. This separation may be useful in other
extremal problems where a very large dependent family is tested by a smaller independent sample.

Two extensions are especially natural. First, one may ask whether subgaussian tails can be weakened
while retaining enough control of row norms and hyperplane normals. Second, for complex
\rev{i.i.d.} entries with a bounded two-dimensional density, the predicted exponent is
$-h(\gamma)/2$; proving the matching upper bound would require a complex analogue of the local and
pair estimates used here.

\begin{revision}
\section*{Acknowledgments and funding}
Y.~Shmalo acknowledges support from the European Research Council (ERC) under the European
Union's Horizon Europe programme (grant agreement No.~101041711), the Simons Foundation through
the Collaboration on the Theoretical Foundations of Deep Learning,
Heights Labs, Convex Nexus Capital, and the Israel Science Foundation (grant numbers 2258/19 and
4101/25).

\section*{Declaration of competing interest}
Y.~Shmalo holds equity interests in both Heights Labs and Convex Nexus Capital, which provided
research support, and discloses these relationships as potential financial competing interests.

\section*{Declaration of generative AI and AI-assisted technologies in the manuscript preparation process}
During the preparation of this work, the authors used generative AI tools to accelerate drafting and
revision. All mathematical claims, proofs, numerical interpretations, and bibliographic information
were subsequently reviewed and edited by the authors, who take full responsibility for the content
of the manuscript.
\end{revision}

\end{document}